\newtheorem{theorem}{Theorem}
\newtheorem{proposition}[theorem]{Proposition}
\newtheorem{corollary}[theorem]{Corollary}
\theoremstyle{definition}
\newtheorem{definition}[theorem]{Definition}
\newtheorem{remark}[theorem]{Remark}
\newtheorem{example}[theorem]{Example}
\newtheorem*{definition*}{Definition}
\numberwithin{theorem}{section}
\newcommand{\EE}{\mathbb{E}}
\newcommand{\RR}{\mathbb{R}}
\newcommand{\NN}{\mathbb{N}}
\newcommand{\KK}{\mathbb{K}}
\newcommand{\DD}{\mathbb{D}}
\newcommand{\Initp}{\mathcal{I}_{p}}
\newcommand{\Init}{\mathcal{I}}
\newcommand{\Dsigma}{\mathcal{D}_{\sigma}(\KK,x)}
\DeclareMathOperator{\ord}{ord}
\newcommand*{\QEDA}{\hfill\ensuremath{\blacksquare}}
\providecommand{\keywords}[1]
{
  \small	
  \textbf{\textit{Keywords---}} #1
}
\title{\bf Computing with D-Algebraic Sequences}
\author{Bertrand Teguia Tabuguia}
\date{}
\begin{document}
\maketitle

\begin{abstract}
  \noindent A sequence is difference algebraic (or D-algebraic) if finitely many shifts of its general term satisfy a polynomial relationship; that is, they are the coordinates of a generic point on an affine hypersurface. The corresponding equations are denoted algebraic difference equations (ADEs). We propose a formal definition of D-algebraicity for sequences and investigate algorithms for their closure properties. We show that subsequences of D-algebraic sequences, indexed by arithmetic progressions, satisfy ADEs of the same orders as the original sequences. Additionally, we discuss the special difference-algebraic nature of holonomic and $C^2$-finite sequences.
\end{abstract}

\keywords{Affine D-algebraic sequence, rationalizing difference polynomial, subsequence}

\section{Introduction}

Some well-known nonlinear recurrence relations arose with the Babylonian (or Heron's) method and Aitken extrapolation. The latter was introduced to estimate limits of convergent sequences. To a sequence of general term $s(n)$, the $\Delta^2$ process, or Aitken's transformation \cite{Aitken1927}, associates the term

\begin{equation}\label{eq:aitken}
 t(n) = s(n) - \frac{(\Delta s(n))^2}{\Delta^2 s(n)} = s(n) - \frac{\left(s(n+1)-s(n)\right)^2}{s(n+2)-2\,s(n+1)+s(n)},
\end{equation}
to accelerate the rate of convergence to $\lim_{n\to \infty} s(n)<\infty$. As discussed in \cite{weniger1989nonlinear}, this transformation has numerous applications in theoretical physics and other sciences \cite{baker1975essentials,brezinski1985convergence}. This paper presents a general study of a class of sequences that is stable under such transformations. These sequences satisfy nonlinear rational or polynomial recursions, similar to the one presented in \eqref{eq:aitken}. Although we do not aim to describe an algorithm for numerical computations, some of our examples are motivated by sequence acceleration in numerical analysis.

To give a glimpse of what our symbolic computations provide for Aitken's $\Delta^2$ process, consider approximating $\sqrt{\ell},\, \ell>0,$ using the Babylonian method. The iteration is governed by the recursion
\begin{equation}\label{eq:heron}
    s(n+1) = \frac{1}{2}\left(s(n)+\frac{\ell}{s(n)}\right),
\end{equation}
with the chosen initial term $s(0)=s_0$. Using our result, we can systematically show that the Aitken transformation yields the sequence with initial term $t(0)=\frac{\ell (\ell+3 s_0^2)}{s_0(s_0^2+3\ell)}$ and recursion
\begin{equation}
    t(n+1) = \frac{2\,\ell\,t(n)}{t(n)^2+\ell}=\frac{2\,\ell}{t(n)+\frac{\ell}{t(n)}}.
\end{equation}

\bigskip

\textbf{1.1. Context.} Following J. F. Ritt's algebraic approach to differential equations \cite{ritt1950differential}, the study of associated objects such as differential polynomials and differential varieties was further developed by Raudenbush \cite{raudenbush1934ideal}, Rosenfeld \cite{rosenfeld1959specializations}, and, more comprehensively with modern mathematics, by Kolchin \cite{kolchin1973differential}. For difference equations, which relate to the classical $\Delta$ operator defined as $\Delta f(x) \coloneqq f(x+1) - f(x)$, R. Cohn developed much of the theory \cite{cohndifferencealgebra}.

\smallskip

A key result of R. Cohn states that every nontrivial ordinary algebraically irreducible difference polynomial admits an abstract solution \cite{cohn1948manifolds}. This theorem has important implications for sequences over an algebraically closed field, say $\KK$, of characteristic zero. Indeed, specialized to the difference ring $\KK^{\NN}$ (with the monoid of natural integers $\NN = \{0, 1, \ldots\}$), Cohn's theorem ensures the existence of sequences that are zeros of ordinary difference polynomial. We call them difference-algebraic (or D-algebraic) sequences and denote their set by $\KK^{\aleph_0}$ (see \Cref{def:dalgdef}).

\smallskip

The scarcity of practical algorithms for arithmetic computations with D-algebraic sequences can largely be attributed to the presence of zero-divisors in sequence rings. It is only recently that a theory for the zeros of algebraic (or polynomial) difference equations (ADEs) within these rings has been proposed \cite{ovchinnikov2020effective}. This is a non-trivial undertaking, as many established results from difference and differential fields do not readily translate to sequence rings \cite{hrushovski2007neumann}. Wibmer introduced a notion of dimension for systems of ADEs with solutions in sequence rings, showing that this dimension need not be an integer \cite{wibmer2021dimension}. Decidability results for solving systems of difference equations in sequences are discussed in \cite{pogudin2020solving}. 

\bigskip

\textbf{1.2. Related work.} In the spirit of \cite{kauers2007algorithm}, we employ difference algebra as a convenient language to investigate the closure properties of D-algebraic sequences. We aim to provide a computational framework for D-algebraic sequences from which known answers related to D-algebraic functions can also be addressed.

\smallskip

D-algebraic functions \cite{RAB2024d,teguia2025arithmetic} generalize functions defined by polynomial differential equations, encompassing classes such as D-finite functions \cite{kauers2023d} and differentially definable functions \cite{jimenez2020some} (see also \cite{teguia2021representation} for related power series). Similarly, D-algebraic sequences generalize sequences defined by polynomial equations involving their shifts and the index variable. These sequences arise naturally in computer science, for example, in the context of cost-register automata \cite{AlurDDRY13} and polynomial automata \cite{benedikt2017polynomial} over a unary alphabet. One also finds D-algebraic sequences in modeling, as illustrated with the discrete May-Leonard model by Roeger and Allen \cite{Roeger01012004}.

\smallskip

A particularly important subclass of D-algebraic sequences consists of rational recursive sequences, which are the zeros of difference polynomials linear in their highest shifts (or order terms) \cite{clemente2023rational,stevic2004more}. We refer to these polynomials as \textit{rationalizing difference polynomials}.

Of course, for a given sequence $(s(n))_{n\in\NN}$, the relation
\begin{equation}\label{eq:rationalizing}
    s(n+r)\,\mathsf{D}\!\left(s(n),s(n+1),\ldots,s(n+r_1)\right) = \mathsf{N}\!\left(s(n),s(n+1),\ldots,s(n+r_2)\right),
\end{equation}
with polynomials $\mathsf{N}, \mathsf{D}$, and integers $r> r_1,r_2$, is ``meaningful'' only if the sequence $(s(n))_{n\in\NN}$ is not a zero of both the difference polynomials encoded by $\mathsf{N}$ and $\mathsf{D}$ simultaneously. This, however, does not affect the definition of a rationalizing difference polynomial; instead, it emphasizes the type of sequence solution. In this paper, we discuss a special type of so-called \textit{generic (sequence) solutions} \cite[P. 32]{levin2008difference}.

\bigskip

\textbf{1.3. Contributions.} We propose a formal definition for D-algebraic sequences. In the case of irreducible difference polynomials, this definition corresponds to special germs of their generic solutions. We then generalize this perspective to accommodate D-algebraic sequences with zeros of arbitrary difference polynomials.

It was shown in \cite{teguia2024rational} that any D-finite (holonomic or P-recursive) equation can be converted into a rationalizing ADE of order bounded by the sum of its holonomic degree and order. It turns out that the corresponding holonomic sequences are often generic zeros of the resulting ADE. We provide a corrected proof of this statement here, as an error was inadvertently introduced in the final version of \cite{teguia2024rational}.

Another notable subclass of D-algebraic sequences is that of $C^2$-finite sequences, introduced by Jiminez-Pastor, Nuspl, and Pillwein in \cite{jimenez2023extension}. We establish a result analogous to Theorem 2 from \cite{teguia2024rational} to give a constructive proof of their D-algebraicity. This proof is based on a new algorithm, \Cref{algo:Algo2}, which converts any $C^2$-finite equation into a rationalizing difference polynomial. Notably, our algorithm makes no explicit use of Gr\"obner bases or linear system solving. While we observe similarities with the resultant method from the differential case \cite{jimenez2020some}, the primary concern of that approach is not to derive an equation in which the highest order term appears linearly. 

Note that the differential analogs of these conversions are far more straightforward. Indeed, in the differential case, the algorithm for D-finite functions essentially involves eliminating the independent variable via the resultant method, followed by computing the first derivative of the resulting differential polynomial. Similarly, unlike $D^2$-finite equations, the conversion of $C^2$-finite equations into rational recursions appears to require different techniques.

\smallskip

Regarding arithmetic operations, while the computational theory for the differential case is relatively well-established (see, e.g., \cite{boulier1995representation,robertz2014formal,van2019computing,teguia2025arithmetic} and references therein), an effective theory for the arithmetic of (nonlinear) difference-algebraic sequences has received less attention. This may also explain the lack of results concerning D-algebraic subsequences, and thus motivates the present work.

This paper demonstrates that D-algebraic sequences behave well under field operations, provided that mild assumptions are made about their consecutive terms. For illustration purposes, consider the following algebraic difference equation satisfied by $(n!)_{n\in\NN}$:

\begin{equation}\label{eq:factorial}
    s\! \left(n+2\right)=\frac{s\! \left(n+1\right) \left(s\! \left(n\right)+s\! \left(n+1\right)\right)}{s\! \left(n\right)}.
\end{equation}
This equation may be obtained using the algorithm from \cite{teguia2024rational} with the minimal D-finite equation satisfied by $n!$ as input. We may use this recursion to construct an ADE satisfied by $u(n)\coloneqq \frac{n!}{n!+1}$, which is not D-finite. For $s(n)$ satisfying \eqref{eq:factorial} with $s(0)=s(1)=1$, we have that $u(n)=~\frac{s(n)}{s(n)+1}$, a rational expression in $s(n)$. Provided that
\begin{equation}\label{eq:condfacto1}
    s(n)s(n+1)s(n+2)(s(n)+1)(s(n+1)+1)(s(n+2)+1) \neq 0,
\end{equation}
\Cref{algo:Algo1} computes the following ADE for $u(n)$, which we write as a recursion to make the coefficient of the order term explicit:
\begin{equation}\label{eq:ratfacto}
    u\! \left(n+2\right)=\frac{u\! \left(n+1\right) \left(2 u\! \left(n+1\right) u\! \left(n\right)-u\! \left(n+1\right)-u\! \left(n\right)\right)}{u\! \left(n+1\right)^{2} u\! \left(n\right)-u\! \left(n+1\right)^{2}+u\! \left(n+1\right) u\! \left(n\right)-u\! \left(n\right)}.
\end{equation}
By considering generic zeros of difference polynomials, we neglect all sequence solutions of the ADE in \eqref{eq:ratfacto} that vanish the denominator on its right-hand side. The conditions made in \eqref{eq:condfacto1} arise from shifts of the denominators in \eqref{eq:factorial} and the relation between $s(n)$ and $u(n)$, which are used during the Gr\"obner bases computations. Again, this condition may be simplified for generic sequences satisfying \eqref{eq:factorial}. However, these conditions can be applied to practical computations involving actual sequences. For a specific $n_0\in\NN$, it is required to verify that all these conditions are satisfied before using \eqref{eq:ratfacto} to compute the subsequent terms of $(u(n))_{n\geq n_0}$. Since for $(n!)_{n\in\NN}$, the non vanishing of the denominator in \eqref{eq:ratfacto} (for $u(n)$) and the conditions in \eqref{eq:condfacto1} are always satisfied, all terms of $(u(n))_{n\in\NN}$ may be computed with \eqref{eq:ratfacto} and its first two initial values. In essence, our algorithmic computations with D-algebraic sequences rely on these kinds of generic assumptions, which are highly dependent on the initial terms of these sequences. The latter already relates to a longstanding decision problem for $C$-finite sequences \cite{ouaknine2012decision}. However, we will not discuss this aspect further, as our interest in this paper lies only in the symbolic constructions with difference polynomials.

\smallskip

In addition to field operations and completing the work in \cite{teguia2024rational}, we prove that D-algebraic sequences are closed under partial sums, partial products, radicals (one may assume complex values and appropriate choices for branches), and composition with arithmetic progressions. This last result can be extended to composition with broader classes of strictly increasing integer sequences. We are unaware of comparable results for such compositions, and we consider our method for computing equations for D-algebraic subsequences another key contribution.

\smallskip

Our method for all these operations is based on the decomposition-elimination-prolongation method developed by Ovchinnikov, Pogudin, and Vo \cite{ovchinnikov2016effective}, which was adapted to the difference algebra setting in \cite{ovchinnikov2020effective} (see also \cite{gao2009characteristic}). We construct difference polynomials for D-algebraic closure properties by adapting the result from \cite{teguia2025arithmetic}, which proposes a method to deal with difference polynomials that are nonlinear in their highest shifts.

\smallskip

The \texttt{NLDE} package \cite{teguia2023operations} contains a sub-package \texttt{DalgSeq} dedicated to the use of nonlinear algebra for difference equations. Most of the algorithms highlighted in this paper are implemented in \texttt{DalgSeq} available at \textcolor{blue}{\url{https://github.com/T3gu1a/D-algebraic-functions}}. 

\section{Definitions}
In this section, we fix some notations and establish a formal definition of a difference-algebraic sequence from the difference algebra setting. For further details on this theoretical perspective, we refer the reader to \cite{cohndifferencealgebra,levin2008difference}.
\begin{definition} A difference ring is a pair $(R,\Sigma)$ where $R$ is a commutative ring and $\Sigma$ is a finite set of pairwise commuting $R$-endomorphisms.  
\end{definition}
A difference ideal of $(R,\Sigma)$ is an ideal $J$ of $R$ such that $\sigma(J)\subset J$ for all $\sigma\in\Sigma$. The difference ring $(R,\Sigma)$ is also denoted $\Sigma$-ring $R$. When $\Sigma=\{\sigma\}$, the $\Sigma$-ring $R$ is simply denoted $\sigma$-ring and said to be ordinary. When $\left|\Sigma\right|>1$, $R$ is a partial difference ring. In this paper, we restrict ourselves to ordinary difference rings. We denote by $\NN$ the set of non-negative integers.

We primarily consider the $\sigma$-ring $(\KK, \sigma)$, where $\KK$ is an algebraically closed field of characteristic zero and $\sigma$ an endomorphism in $\KK$.
\begin{definition} The ring of difference polynomials in one difference indeterminate $x$ over $\KK$ , also called the free difference $\KK$-algebra in $x$, is the difference ring $\left(\KK[x_0,x_1,x_2,\ldots],\Tilde{\sigma}\right)$, where $\Tilde{\sigma}$ extends $\sigma$ as follows: $\Tilde{\sigma}(a)=\sigma(a)=a$, for any $a\in\KK$, and $\Tilde{\sigma}(x_j)=x_{j+1}$, $j\in\NN$. For simplicity, this difference ring is denoted by $\mathcal{D}_{\sigma}(\KK,x)\coloneqq \KK\left[\sigma^j(x) \mid j\in\NN \right]$ or $\KK\left[\sigma^{\infty}(x)\right]$ with the property $\sigma^{j+1}(x)=\sigma(\sigma^j(x)), j\in\NN$.
\end{definition}
\begin{definition}\label{def:diffpoly} The order of a difference polynomial $p\in\mathcal{D}_{\sigma}(\KK,x)$, denoted $\ord(p)$, is the maximum $j\in\NN$ such that $\sigma^j(x)$ appears in $p$. The degree of $p$ denoted $\deg(p)$ is the total degree of $p$ as a polynomial in the algebraic polynomial ring $\KK[x,\sigma(x),\ldots,\sigma^{\ord(p)}(x)]$. We say that $p$ is rationalizing when $p$ is linear in its highest shift term, i.e., the degree of $p$ with respect to $\sigma^{\ord(p)}(x)$ is $1$.
\end{definition}
 To any difference polynomial $p\in\mathcal{D}_{\sigma}(\KK,x)$, we associate a recurrence equation called {\it algebraic difference equation} obtained by the equality $p=0$. For a rationalizing difference polynomial, the associated recurrence equation is equivalent to a rational recursion.
\begin{example} The difference polynomial associated to \eqref{eq:factorial} is $\sigma^2(x)x-\sigma(x)(x+\sigma(x)),$ of order $2$ and degree $2$. For \eqref{eq:ratfacto}, the corresponding difference polynomial is of order $2$ and degree $4$. \QEDA 
\end{example}

In classical algebraic geometry, zeros of polynomials over $\KK$ correspond to points in $\KK^N$, the affine $N$-space over $\KK$, for some fixed $N\in\NN\setminus \{0\}$, defining algebraic sets or varieties with the ideals of those polynomials. In our setting, the corresponding points may be regarded with denumerable coordinates as we want any zero of $p\in\mathcal{D}_{\sigma}(\KK,x)$ to have coordinates that vanish $\sigma^j(p)$, for all $j\in\NN$. Every such point defines a sequence over $\KK$. We denote the set of such sequences over $\KK$ by $\KK^{\aleph_0}$, where $\aleph_0$ is {\it aleph zero}, the cardinality of $\NN$. This makes sense of the passage from a finite to an infinite (countable) number of coordinates. Note that $\KK^{\aleph_0} \subset \KK^{\NN}$, and the inclusion is strict since $(n^n)_{n\in\NN}$ is not a zero of a difference polynomial. The operations in $\KK^{\aleph_0}$ are inherited from the difference ring $\KK^{\NN}$ defined with the shift endomorphism.

\begin{definition}\label{def:defhomo} A homomorphism of difference rings $(R_1,\sigma_1)$ and $(R_2,\sigma_2)$ is a ring-homomorphism $\varphi\colon R_1\longrightarrow R_2$ such that $\varphi \circ \sigma_1 = \sigma_2 \circ \varphi$.
\end{definition}
\begin{definition}\label{def:defzero} Let $p\in\mathcal{D}_{\sigma}(\KK,x)$ and $(s(n))_{n\in\NN}\in(\KK^{\aleph_0}, \sigma')$ as previously defined. We say that $(s(n))_{n\in\NN}$ is a zero of $p$ if, under the unique homomorphism of difference rings $\mathcal{D}_{\sigma}(\KK,x)\longrightarrow~(\KK^{\aleph_0},\sigma')$ given by the extension that sends $x$ to $(s(n))_{n\in\NN}$, $p$ is sent to $0$. We interpret the fact that $(s(n))_{n\in\NN}$ is a zero of $p$ by ``$p(s(n))=0$ for all non-negative integers $n$'' or ``$p((s(n))_{n\in\NN})=0$'' (or simply $p(s(n))=0$ with unspecified $n$).
\end{definition}

Several sequences are zeros of difference polynomials, but in some cases, being a zero of a difference polynomial does not particularly add value to the knowledge of the sequence. According to \Cref{def:defzero}, even the sequence of Bernoulli numbers satisfies an algebraic difference equation. Indeed, consider the difference polynomial
\begin{equation}\label{eq:berndiffpol}
    p\coloneqq 5 \,\sigma^3(x)\,x-6 \,\sigma^2(x)\sigma(x)+\sigma(x)\,x.
\end{equation}

Recall that the $n$th Bernoulli number $B_n$ may be computed by the formulas:

\begin{equation}
    \begin{split}
        &B_0=1,\, B_{2n} = \frac{(-1)^{n+1} 2\,(2n)!}{(2\pi)^{2n}}\zeta(2n),\, n\geq 1,\label{eq:zeta}\\
        &B_1=-\frac{1}{2},\, B_{2n+1} = 0,\, n\geq 1,
    \end{split}
\end{equation}

where $\zeta$ is the Riemann Zeta function. One verifies that for all $n\in\NN$, $p(B_n)=0$. However, the only terms of $(B_n)_{n\in\NN}$ that can be deduced from $p$ are the terms $B_{2n}, n\leq 2$ and $B_{2n+1},n\geq 0$. When such a phenomenon happens for a given sequence and a difference polynomial, we say that the sequence is not a {\it generic zero} of that difference polynomial. Let us make this statement more precise.

\begin{definition} Let $p\in\mathcal{D}_{\sigma}(\KK,x)$. The initial $\mathcal{I}_p$ of $p$ is the leading coefficient of $p$ viewed as a univariate polynomial in $\sigma^{\ord(p)}(x)$.
\end{definition}

\begin{proposition}\label{prop:orddegI} Let $p\in\mathcal{D}_{\sigma}(\KK,x)\setminus \KK$. We have $\ord(\mathcal{I}_p)<\ord(p)$ and $\deg(\mathcal{I}_p)<\deg(p)$.
\end{proposition}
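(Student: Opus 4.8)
\emph{Proof proposal.} The plan is to pass to the underlying algebraic polynomial ring $\KK[x,\sigma(x),\ldots,\sigma^{r}(x)]$, where $r\coloneqq\ord(p)$, and to exploit that $\mathcal{I}_p$ — being the leading coefficient of $p$ regarded as a univariate polynomial in $\sigma^{r}(x)$ — already lies in the strictly smaller ring $\KK[x,\sigma(x),\ldots,\sigma^{r-1}(x)]$. Concretely: since $p\notin\KK$, the variable $\sigma^{r}(x)$ genuinely occurs in $p$, so setting $d\coloneqq\deg_{\sigma^{r}(x)}(p)\geq 1$ one writes
$$p=\mathcal{I}_p\cdot\big(\sigma^{r}(x)\big)^{d}+q,\qquad \deg_{\sigma^{r}(x)}(q)<d,$$
with $\mathcal{I}_p\in\KK[x,\ldots,\sigma^{r-1}(x)]\setminus\{0\}$, the nonvanishing holding because $\mathcal{I}_p$ is by definition a leading coefficient.

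The order statement is then immediate: no difference variable $\sigma^{j}(x)$ with $j\geq r$ occurs in $\mathcal{I}_p$, whence $\ord(\mathcal{I}_p)\leq r-1<r=\ord(p)$ (in the degenerate case $r=0$ one has $\mathcal{I}_p\in\KK$, and under the usual convention $\ord(c)=-\infty$ for $c\in\KK$ the inequality still holds). For the degree, the key point is that the displayed decomposition is graded in $\sigma^{r}(x)$ with no cancellation: every monomial of $\mathcal{I}_p\cdot(\sigma^{r}(x))^{d}$ has $\sigma^{r}(x)$-degree exactly $d$, whereas every monomial of $q$ has $\sigma^{r}(x)$-degree strictly less than $d$. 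Hence $\deg(p)\geq\deg\big(\mathcal{I}_p\cdot(\sigma^{r}(x))^{d}\big)$, and since $\KK[x,\ldots,\sigma^{r}(x)]$ is an integral domain, total degree is additive over products, so $\deg\big(\mathcal{I}_p\cdot(\sigma^{r}(x))^{d}\big)=\deg(\mathcal{I}_p)+d\geq\deg(\mathcal{I}_p)+1$. Combining the two bounds gives $\deg(\mathcal{I}_p)<\deg(p)$.

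I do not anticipate a genuine obstacle here; the only subtlety worth flagging is in the degree argument, where one should resist treating $\mathcal{I}_p$ as a ``subpolynomial'' of $p$ — it is merely a coefficient — and instead argue, as above, via the $\sigma^{r}(x)$-grading that the term $\mathcal{I}_p(\sigma^{r}(x))^{d}$ survives in $p$ with its full total degree. Stating the $r=0$ corner case explicitly is the other minor point of care.
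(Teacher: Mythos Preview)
Your proposal is correct and follows essentially the same route as the paper: write $p=\mathcal{I}_p\cdot(\sigma^{r}(x))^{d}+q$ with $\deg_{\sigma^{r}(x)}(q)<d$, read off $\ord(\mathcal{I}_p)<r$ immediately, and conclude $\deg(p)\geq \deg(\mathcal{I}_p)+d>\deg(\mathcal{I}_p)$. Your version is slightly more explicit about the non-cancellation via the $\sigma^{r}(x)$-grading and flags the $r=0$ edge case, but the argument is the same.
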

\begin{proof} The inequality for the order is immediate from the definition. For the degree, let $r=\ord(p)$ and $m\coloneqq\deg_{\sigma^r(x)}(p)>0$ be the degree of $p$ in $\sigma^r(x)$. From the definition of $\mathcal{I}_p$, it follows that there exists $p_1 \in\mathcal{D}_{\sigma}(\KK,x), \deg_{\sigma^r(x)}(p_1)<m, \ord(p_1)\leq r$ such that 
\begin{equation}\label{eq:indegp1}
p=\mathcal{I}_p\left(\sigma^{\ord(p)}(x)\right)^m+p_1,
\end{equation}
showing that $\deg(p)\geq \deg(\mathcal{I}_p)+m>\deg(\Initp)$.
\end{proof}
\begin{definition} Let $p\in\mathcal{D}_{\sigma}(\KK,x)$. A sequence $(s(n))_{n\in\NN}$ is a generic zero of $p$ if it is a zero of all $P$ in the difference ideal $<p>$ and not a zero of any $P\notin <p>$.
\end{definition}
\begin{proposition}\label{prop:genzeroInit} If a sequence $(s(n))_{n\in\NN}$ is a generic zero of an algebraically irreducible difference polynomial $p\in\mathcal{D}_{\sigma}(\KK,x)$, then $\mathcal{I}_p((s(n))_{n\in\NN})\neq 0$.
\end{proposition}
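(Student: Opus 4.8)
The plan is to argue by contradiction. Suppose $\mathcal{I}_p\bigl((s(n))_{n\in\NN}\bigr)=0$. Since $(s(n))_{n\in\NN}$ is a generic zero of $p$ and is a zero of $\mathcal{I}_p$, the definition of generic zero forces $\mathcal{I}_p\in\langle p\rangle$. Now $\mathcal{I}_p\neq 0$ in $\mathcal{D}_{\sigma}(\KK,x)$, being the leading coefficient of a nonzero polynomial; and if $\ord(p)=0$ then $\mathcal{I}_p\in\KK\setminus\{0\}$, so $\mathcal{I}_p\bigl((s(n))_{n\in\NN}\bigr)$ would be a nonzero constant sequence — impossible. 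Hence we may set $r:=\ord(p)\geq 1$, and by \Cref{prop:orddegI} we have $\mathcal{I}_p\in\KK[x,\sigma(x),\dots,\sigma^{r-1}(x)]\setminus\{0\}$. Thus the proposition reduces to the purely algebraic statement
\[
\langle p\rangle\cap\KK[x,\sigma(x),\dots,\sigma^{r-1}(x)]=\{0\},
\]
i.e.\ $\langle p\rangle$ contains no nonzero difference polynomial of order less than $\ord(p)$. (In the language of the generic zero, this says exactly that $s(0),\dots,s(r-1)$ are algebraically independent over $\KK$, so that $\mathcal{I}_p$ cannot vanish at $(s(0),\dots,s(r-1))$.)

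To establish this I would use the triangular structure of the generators of $\langle p\rangle$. Write $p=\mathcal{I}_p\,(\sigma^{r}(x))^{m}+p_1$ as in \eqref{eq:indegp1}, with $m=\deg_{\sigma^{r}(x)}(p)\geq 1$, $\deg_{\sigma^{r}(x)}(p_1)<m$, $\ord(p_1)\leq r$. For every $j\geq 0$ the shift $\sigma^{j}(p)=\sigma^{j}(\mathcal{I}_p)\,(\sigma^{r+j}(x))^{m}+\sigma^{j}(p_1)$ involves the variables $\sigma^{j}(x),\dots,\sigma^{r+j}(x)$; its leader is $\sigma^{r+j}(x)$, occurring with degree $m$ and leading coefficient $\sigma^{j}(\mathcal{I}_p)\neq 0$, which involves only the strictly lower variables $\sigma^{j}(x),\dots,\sigma^{r+j-1}(x)$. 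Given $f\in\langle p\rangle$ of order $<r$, write $f=\sum_{j=0}^{N}a_j\,\sigma^{j}(p)$ inside some polynomial ring $\KK[x,\sigma(x),\dots,\sigma^{M}(x)]$. I would then run a downward induction on $N$: the top variable $\sigma^{r+N}(x)$ occurs neither in $f$ nor in any $\sigma^{j}(p)$ with $j<N$, so comparing the coefficients of the powers of $\sigma^{r+N}(x)$ on both sides — using that the ambient coefficient ring is an integral domain and $\sigma^{N}(\mathcal{I}_p)\neq 0$ — pushes $a_N$ into the ideal generated by $p,\sigma(p),\dots,\sigma^{N-1}(p)$, thereby lowering $N$. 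The base case $N=0$ is $(p)\cap\KK[x,\sigma(x),\dots,\sigma^{r-1}(x)]=\{0\}$, which follows from a single comparison of degrees in $\sigma^{r}(x)$ (as $\deg_{\sigma^{r}(x)}(p)=m\geq 1$).

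The delicate point — and, I expect, the real content — is that the inductive step is legitimate only if $\sigma^{N}(\mathcal{I}_p)$ remains a non-zero-divisor modulo the ideal $\langle p,\sigma(p),\dots,\sigma^{N-1}(p)\rangle$ already produced. I would secure this either by strengthening the induction hypothesis to also carry the non-zero-divisor property of the next shifted initial (which propagates along the triangular structure, since adjoining the root of a polynomial with non-zero-divisor leading coefficient in a fresh variable preserves the needed regularity), or, more transparently, by carrying the whole computation out in the localization $S^{-1}\mathcal{D}_{\sigma}(\KK,x)$ at the multiplicative set $S$ generated by $\{\sigma^{j}(\mathcal{I}_p):j\geq 0\}$: since $\mathcal{D}_{\sigma}(\KK,x)$ is an integral domain these are nonzero, the localization is again a domain, each $\sigma^{j}(p)$ becomes monic in its leader, the successive quotients become free modules, and $\mathcal{I}_p$ — now invertible — cannot lie in the proper ideal generated by the image of $\langle p\rangle$. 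Note that algebraic irreducibility of $p$ is not actually needed for this elimination; it enters the statement only to guarantee, via Cohn's theorem, that a generic zero of $p$ exists at all. Alternatively, the proposition can be deduced from Cohn's description of the principal component of an algebraically irreducible difference polynomial, whose generic zero is classically known not to annihilate the initial.
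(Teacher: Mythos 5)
Your opening reduction (from genericity, $\mathcal{I}_p((s(n))_n)=0$ forces $\mathcal{I}_p\in\langle p\rangle$, and $\mathcal{I}_p$ is a nonzero element of $\KK[x,\ldots,\sigma^{r-1}(x)]$ by \Cref{prop:orddegI}) matches the paper, but your argument stops exactly where the difficulty begins, and neither of the devices you offer gets across it. In the downward induction, comparing coefficients of the top power of $\sigma^{r+N}(x)$ only yields a relation of the form $a_{N,d}\,\sigma^{N}(\mathcal{I}_p)\in\langle p,\ldots,\sigma^{N-1}(p)\rangle$; the fact that the ambient ring is a domain lets you cancel a nonzero factor from an equation equal to zero, not from a membership in a nonzero ideal, so to lower $N$ you need $\sigma^{N}(\mathcal{I}_p)$ to be a non-zero-divisor modulo the lower shifts. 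Your parenthetical claim that this regularity ``propagates along the triangular structure'' is precisely what has to be proved: adjoining a root of a polynomial with regular leading coefficient in a fresh variable keeps the ring nonzero (it is a free module of positive rank), but it says nothing about whether a \emph{given} element involving the new variable remains regular. The localization variant hides the same issue: since $\mathcal{D}_{\sigma}(\KK,x)$ is a domain, the ideal generated by the image of $\langle p\rangle$ in $S^{-1}\mathcal{D}_{\sigma}(\KK,x)$ is proper if and only if no product of shifts of $\mathcal{I}_p$ lies in $\langle p\rangle$ — a strengthening of the very statement you are proving — so asserting that the invertible $\mathcal{I}_p$ ``cannot lie in the proper ideal'' assumes the conclusion. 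The soft argument does work for one step (since $(p)$ is prime and $\deg\sigma(\mathcal{I}_p)=\deg\mathcal{I}_p<\deg p$ by \Cref{prop:orddegI}, one gets $\sigma(\mathcal{I}_p)\notin(p)$, and note this already uses irreducibility, contrary to your remark that it is not needed), but after quotienting by $p$ and $\sigma(p)$ the ring is no longer a domain, irreducibility of $\sigma^2(p)$ over it is lost, and the regularity of $\sigma^2(\mathcal{I}_p)$ is essentially the nontrivial content of Cohn's existence theory rather than a routine verification.

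Your fallback — citing Cohn's theory of the principal component of an algebraically irreducible difference polynomial (whose generic zero does not annihilate the initial, and which contains $\langle p\rangle$, giving $\mathcal{I}_p\notin\langle p\rangle$) — would indeed yield the proposition, but as written it is a pointer, not an argument, and it is offered only as an aside. Note also that the paper does not prove your stronger elimination statement $\langle p\rangle\cap\KK[x,\ldots,\sigma^{r-1}(x)]=\{0\}$ at all: it works directly with a representation $\mathcal{I}_p=\sum_{j}q_j\,\sigma^{j}(p)$, interprets the $q_j$ as reduction quotients with respect to the triangular set $\{\sigma^{j}(p)\}$, and derives a contradiction by total-degree counting against $\deg\mathcal{I}_p<\deg p$ from \Cref{prop:orddegI} — a shorter route that avoids having to establish the regularity of all shifted initials.
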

\begin{proof} If $\mathcal{I}_p((s(n))_{n\in\NN}) = 0$ then $\mathcal{I}_p\in <p>$. Since $p$ is irreducible, there exist difference polynomials $q_0,q_1,\ldots,q_k\in\mathcal{D}_{\sigma}(\KK,x)$ such that 
\begin{equation}\label{eq:lincombInit}
   \Initp = \sum_{j=0}^k q_j\sigma^j(p). 
\end{equation}
Using an orderly ranking on $R\coloneqq \KK[x,\sigma(x),\ldots,\sigma^{k+\ord(p)}(x)]$, the coefficients $q_j$'s can be interpreted as the quotients in a reduction (division) of $\Initp$ with respect to the \textit{triangular set} $\{\sigma^j(p),j=0\ldots,k\}$ generating the truncation of $<p>$ in $R$. By the reduction algorithm, we must have that
\[\deg(\Initp) = \deg(q_k\,\sigma^k(p))=\deg(q_k)+\deg(p),\]
using the correspondence condition between the leading terms on both sides in \eqref{eq:lincombInit}.

However, by \Cref{prop:orddegI} we know that $\deg(\Initp)<\deg(p)$. Therefore we must have $\deg(q_k)<~0$, thus $q_k=0$. Repeating this reasoning with the remaining $q_j$'s, $0\leq j < k$, leads to $\Initp = 0$, a contradiction since $p$ is irreducible. 
\end{proof}

The following theorem may be regarded as a corollary of R. Cohn's existence theorem for ordinary difference polynomials.
\begin{theorem}{\cite[Theorem IV]{cohn1948manifolds}}\label{th:cohn} Every irreducible difference polynomial in $\mathcal{D}_{\sigma}(\KK,x)\setminus \KK$ has a generic zero.
\end{theorem}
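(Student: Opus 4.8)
The plan is to deduce the statement from Cohn's existence theorem \cite[Theorem IV]{cohn1948manifolds}; the only genuine content is to ensure that the abstract generic solution it provides can be realised inside the sequence ring $(\KK^{\aleph_0},\sigma')$.

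First I would apply Cohn's theorem to the algebraically irreducible $p$: it furnishes a difference field extension $(\mathcal{F},\tau)$ of $(\KK,\sigma)$ and an element $\eta\in\mathcal{F}$ with $p(\eta)=0$ whose vanishing ideal $\mathfrak{q}=\{P\in\mathcal{D}_{\sigma}(\KK,x):P(\eta)=0\}$ is the prime reflexive difference ideal cut out by $p$ (its general component); by the argument of \Cref{prop:genzeroInit}, $\mathcal{I}_p\notin\mathfrak{q}$. Put $\mathcal{R}=\KK[\tau^{j}\eta:j\in\NN]\cong\mathcal{D}_{\sigma}(\KK,x)/\mathfrak{q}$, a finitely generated difference $\KK$-algebra which is an integral domain and on which $\tau$ restricts to an injective endomorphism.

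Every $\KK$-algebra homomorphism $\varphi\colon\mathcal{R}\to\KK$ (these exist since $\KK$ is algebraically closed) yields a difference-ring homomorphism $\psi\colon\mathcal{R}\to\KK^{\NN}$, $a\mapsto\bigl(\varphi(\tau^{n}a)\bigr)_{n\in\NN}$, and hence a sequence $s\coloneqq\psi(\eta)$, i.e. $s(n)=\varphi(\tau^{n}\eta)$; since $\psi$ annihilates $\mathfrak{q}$, this $s$ is automatically a zero of $p$ and of every element of $\langle p\rangle$. The remaining — and main — point is to choose $\varphi$ so that $s$ is a \emph{generic} zero, equivalently so that $\psi$ is injective, equivalently $\bigcap_{n\ge0}\tau^{-n}(\ker\varphi)=(0)$. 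Dually, $\ker\varphi$ is a $\KK$-point $\xi$ of $X=\operatorname{Spec}\mathcal{R}$, the condition requires the forward orbit of $\xi$ under the dominant self-map of $X$ induced by $\tau$ to be Zariski dense, and such a $\xi$ is obtained by Cohn's step-by-step construction of the generic solution: one selects the terms $s(0),s(1),s(2),\dots$ successively, each as unconstrained over the field generated by its predecessors as the already-imposed shifts of $p$ and the condition $\mathcal{I}_p\neq0$ allow — precisely the place where algebraic closedness, characteristic zero, and the irreducibility of $p$ enter. If one prefers, Cohn's theorem may be cited in the stronger form in which the generic solution is already built inside a sequence ring, in which case this last step is vacuous and the statement is immediate.
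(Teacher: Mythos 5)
The paper does not actually prove this statement: it is quoted as Cohn's Theorem IV, with the surrounding text only remarking that it ``may be regarded as a corollary'' of Cohn's existence theorem specialized to sequences. So your opening move (invoke Cohn) and your closing escape hatch (cite Cohn in a form where the generic solution already lives in a sequence ring) coincide with the paper's stance. The difficulty is that the bulk of your argument --- transferring Cohn's abstract generic solution into $(\KK^{\aleph_0},\sigma')$ --- is exactly the nontrivial content, and as written it has two genuine gaps. First, the existence of a $\KK$-algebra homomorphism $\varphi\colon\mathcal{R}\to\KK$ is justified only by ``$\KK$ is algebraically closed''; but $\mathcal{R}=\KK[\tau^{j}\eta : j\in\NN]$ is finitely generated only as a \emph{difference} algebra, and as a plain $\KK$-algebra it is merely countably generated, so the Nullstellensatz does not apply. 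Over a countable algebraically closed field such as $\overline{\QQ}$ (which the hypotheses allow), a countably generated domain need not have any $\KK$-point at all (e.g.\ $\KK(t)$, presented by $t$ and the inverses $(t-a)^{-1}$, $a\in\KK$, admits no $\KK$-algebra map to $\KK$).

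Second --- and you concede this is the main point --- the genericity of the resulting sequence is not established. ``Choose $s(0),s(1),\ldots$ successively, each as unconstrained as the shifts of $p$ allow'' is Cohn's construction \emph{in a field extension}, where each new term may be taken transcendental over the field generated by its predecessors; inside $\KK$ itself this is unavailable when $\KK$ has small transcendence degree over $\QQ$ (over $\overline{\QQ}$ each new term $s(n+r)$ is one of finitely many roots of a univariate polynomial, and nothing prevents the accumulated choices from satisfying relations outside the intended ideal), and even over $\CC$ one must argue that countably many constructible ``bad'' conditions can be avoided simultaneously, which you do not do. Realizing (generic) solutions of difference equations in sequence rings is genuinely delicate --- it is the subject of the Pogudin--Scanlon--Wibmer paper cited in the introduction --- so this step cannot be dismissed as routine. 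A further mismatch worth flagging: injectivity of $\psi$ makes the vanishing ideal of $s$ equal to the general component $\mathfrak{q}$, whereas the paper's definition of a generic zero demands the difference ideal $\langle p\rangle$ itself; that discrepancy is arguably the paper's own looseness, but your claimed equivalence ``generic zero $\iff$ $\psi$ injective'' silently conflates the two.
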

 Following Cohn's original proof, the core idea of a generic zero—as illustrated by \Cref{prop:genzeroInit} and \Cref{th:cohn}—is that all subsequent terms of such a sequence can be computed from a corresponding difference polynomial $p \in \Dsigma$ and its first $\ord(p)$ initial terms. This property, however, is the essential problem that hampers the definition of the Bernoulli sequence using the irreducible polynomial $p$ from \eqref{eq:berndiffpol}.

The classical definition of a generic zero does not fully capture this idea of computability. We resolve this by introducing the class of D-algebraic sequences. To simplify the understanding of our main definition, we first consider the case of irreducible difference polynomials.

 \begin{definition}[D-algebraicity with irreducible difference polynomial]\label{def:dalgdefirr} Let $p\in\Dsigma$ be irreducible. A sequence $(s(n))_{n\in\NN}$ is difference-algebraic (or D-algebraic) over $\KK$ with defining polynomial $p$, if $\Initp(s(n))\neq 0$ for sufficiently large $n$.
 \end{definition}

Whether the Bernoulli sequence is D-algebraic or not is not proven in this paper. However, from \Cref{def:dalgdefirr} we know that the difference polynomial $p$ in \eqref{eq:berndiffpol}
cannot be used to define the Bernoulli sequence since $\Initp(B_{2n+1})=0, n\geq 1$. In other words, for an irreducible difference polynomial $p$ to define a D-algebraic sequence $(s(n))_{n\in\NN}$, we must have $|\{n\in\NN, \Initp(s(n))=0\}|<\infty.$ We refer to this condition as {\it the regularity condition}.

In general, to show that a given sequence is D-algebraic, it is enough to show that all its terms can be obtained by finding the roots of some univariate polynomials resulting from evaluating a difference polynomial with its previous terms. Generalizing \Cref{def:dalgdefirr} for arbitrary difference polynomials, we obtain the definition below.

\begin{definition}\label{def:dalgdef} Let $p\in\Dsigma$, and $p=p_0p_1\cdots p_{\ell},$ its decomposition into irreducible components. Let $\NN_{p_j}(s(n))\coloneqq \{n\in\NN,\, p_j(s(n))=0\}.$ A sequence $(s(n))_{n\in\NN}$ is difference-algebraic (or D-algebraic) over $\KK$ with defining difference polynomial $p$, if:
\begin{enumerate}
    \item For all $j\in\{0,\ldots,\ell\}$, $|\NN_{p_j}(s(n))|=\infty;$
    \item For all $j\in\{0,\ldots,\ell\}$, $|\{n\in\NN_{p_j}(s(n)),\, \Init_{p_j}(s(n))=0\}|<\infty;$
    \item For all $n\in\NN$, there exists a unique $j\in\{0,\ldots,\ell\}$ such that $p_j(s(n))=0$.
\end{enumerate}
\end{definition}

Observe that when $\ell=0,$ \Cref{def:dalgdef} recovers \Cref{def:dalgdefirr}: conditions $1$ and $3$ together ensure that $(s(n))_{n\in\NN}$ is a generic zero of $p_0$, and condition $2$ corresponds precisely to the regularity condition. Given the ``switching'' behavior of a D-algebraic sequence's terms among the irreducible factors of its defining difference polynomial (as described in \Cref{def:dalgdef}), we may refer to such sequences as \textit{polymorphic} D-algebraic sequences. Nevertheless, we will not explicitly distinguish between the two in this paper. From the next section onwards, our focus will primarily be on D-algebraic sequences defined by irreducible difference polynomials.

\begin{example}[``A D-algebraic continuation of Bernoulli numbers'']\label{ex:bernoullicont} The sequence $(b(n))_{n\in\NN}$ defined as
\begin{equation}\label{eq:sumconsec}
\begin{split}
    &b\! \left(n+3\right)=-\frac{6 b\! \left(n\right) b\! \left(n+1\right)+5 b\! \left(n\right) b\! \left(n+2\right)-6 b\! \left(n+1\right)^{2}-30 b\! \left(n+1\right) b\! \left(n+2\right)}{25 b\! \left(n\right)},\, n\geq 0,\\
    &b\left(0\right)\coloneqq \frac{1}{2};\, b\left(1\right)\coloneqq -\frac{1}{3}\,; b\left(2\right)\coloneqq \frac{1}{6}, 
\end{split}
\end{equation}
is a D-algebraic sequence whose terms are defined by the sum of two consecutive terms of a D-algebraic zero of \eqref{eq:berndiffpol}. Its first three initial terms are $B_0+B_1,\, B_1+B_2,$ and $B_2+B_3$, obtained with the Bernoulli sequence. This shows that the sequence 
\[(u(n))_{n\in\NN}=\left(1,-\frac{1}{2},\frac{1}{6},0,-\frac{1}{30},0,\frac{91}{3750},0,-\frac{423241}{11718750},0,\frac{85414689451}{915527343750},0,\ldots\right),\] 
defined by 
\begin{equation}
    u(n)\coloneqq \begin{cases}
    b(n)\quad \text{if}\quad n=2k,\, k\geq1,\\
    B_n\quad \text{ otherwise, } 
    \end{cases}
\end{equation}
is a generic zero of \eqref{eq:berndiffpol}, which, as the Bernoulli sequence, cannot be defined by it. By adjusting \eqref{eq:sumconsec}, we deduce the following difference polynomial to define $(u(n))_{n\in\NN}$ as a D-algebraic sequence.
\begin{equation}
    \sigma^2(x)\,\left(25\,x \sigma^4(x) + 11\,x\sigma^2(x) - 36\,(\sigma^2(x))^2+\sigma^3(x)\right).
\end{equation}
For $n\geq 0$, the first factor from the left helps in computing zero terms at odd indices, and the second factor helps to compute the nonzero terms.
\QEDA
\end{example}

\begin{example}\label{ex:interlaceeg1} Consider $p\coloneqq\left(2 x\sigma(x)+x+\sigma(x)\right) \left(2 \sigma(x) x-x-\sigma(x)\right)$. Using $p$ and the initial term $s(0)\coloneqq 1$, we define the recursion:
\begin{equation}\label{eq:sinterlace}
    s(n+1)\coloneqq \begin{cases}
                     \frac{s\left(n\right)}{2\cdot s\left(n\right)-1},\, \text{if}\,\,\, n\,\,\, \text{is odd,}\\
                     -\frac{s\left(n\right)}{2\cdot s\left(n\right)+1},\, \text{if}\,\,\, n\,\,\, \text{is even.}
                     \end{cases}
\end{equation}
The sequence $(s(n))_{n\in\NN}$ is D-algebraic and we have $s(n)=\frac{(-1)^n}{2n+1},$ for all $n\in\NN$. This can be shown by checking the initial term and substituting $\frac{(-1)^n}{2n+1}$ for $n=2k$ and $n=2k+1$ into \eqref{eq:sinterlace} to verify that $s(n+1)=\frac{(-1)^{n+1}}{2(n+1)+1}.$ Using the algorithm from \cite{teguia2024rational}, one shows that $(s(n))_{n\in\NN}$ can also be defined with the difference polynomial $q\coloneqq (2\,x+\sigma(x))\,\sigma^2(x)+x\sigma(x).$ A question of interest for future studies arose in the understanding of the relation between $p$ and $q$ for $(s(n))_{n\in\NN}$.   \QEDA
\end{example}

We view $\KK^{\aleph_0}$ as the set of \textit{affine D-algebraic sequences}, which we will simply refer to as D-algebraic sequences, as these are our sole focus. Although arithmetic with coordinates does not have a particular interest in algebraic geometry, we anticipate a comparative study relating {\it projective D-algebraic sequences}, which we may write $\mathbb{P}^{\aleph_0}(\KK)$, and $\KK^{\aleph_0}$, especially on their defining difference polynomials. 

\begin{remark}
     With the above definition of a difference-algebraic sequence, it is convenient to look at an irreducible difference polynomial $p\in\mathcal{D}_{\sigma}(\KK,x)$ as the {\it specialized} (see \cite{pogudin2020solving}) difference polynomial $p\in \DD_s\coloneqq \KK[\sigma^j(s(n)) | j\in\NN]$, where $s(n)$ (symbolically) represents the general term of a D-algebraic zero of $p$. This will be used to avoid lengthy notations. We use the automorphism $\sigma$ with the assumption that it is extended accordingly for $\KK^{\aleph_0}$.
\end{remark}

\section{Arithmetic}

We adapt the construction from \cite[Section 2.2]{teguia2025arithmetic} to the difference case. We focus on irreducible difference polynomials. 

Let $N\in\NN\setminus \{0\}$. We consider $N$ D-algebraic sequences $(s_i(n))_{n\in\NN},$ each defined by a given irreducible difference polynomial $p_i\in\Dsigma,$ of order $n_i\in\NN.$ Let $g,h\in\KK[X_1,\ldots,X_N]$ be two coprime polynomials, and define $f\coloneqq \frac{g}{h}$. We aim to construct a difference polynomial $q$ such that the sequence defined by  $t(n)=f(s_1(n),\ldots,s_N(n))$ is a D-algebraic zero of $q.$ 

For $t(n)$ to be well-defined for sufficiently large $n$, the denominator $h(s_1(n),\ldots,s_N(n))\neq 0$ must be nonzero. Combining this requirement with the regularity condition for each individual sequence $(s_i(n))_{n\in\NN}$, we deduce the following critical prerequisite for the construction:
\begin{equation}\label{eq:arithmcond}
    Q(s_1(n),\ldots,s_N(n))\coloneqq h(s_1(n),\ldots, s_N(n))\, \prod_{i=1}^N\Init_{p_i}(s_i(n)) \,\neq\, 0,\,\, \text{for sufficiently large}\,\, n.
\end{equation}
For our algorithmic approach, we utilize the symbolic representation $p_i(s_i(n),\ldots,s_i(n+n_i))\in\DD_{s_i}.$ This signifies that we are treating the $s_i(n)$'s as symbolic variables, rather than focusing on the specific numerical values dependent on $n$. We use these $p_i$'s and $f$, combined with condition \eqref{eq:arithmcond}, to construct a difference polynomial $q(t(n),\ldots,t(n+n_q))\in\DD_t, n_q\in\NN.$ This $q$ is designed to vanish when evaluated at $t(n)=f(s_1(n),\ldots,s_N(n))$, provided that relevant shifts of $Q(s_1(n),\ldots,s_N(n))$ are nonzero. In practice, due to the D-algebraic nature of the sequences $s_i$'s, this condition simplifies to assuming that corresponding shifts of $h(s_1(n),\ldots,s_N(n))$ are nonzero.

While precisely determining the indices $n$ for which $Q(s_1(n),\ldots,s_N(n))$ might vanish is generally challenging, this underscores the importance and utility of an algorithmic method that can derive $q$ under the assumption of condition \eqref{eq:arithmcond}'s general validity. We also mention that the initial terms of the sequences play a crucial role in the practical application of our method.

\subsection{Algorithm}\label{sec:algo}

We use the same notations from the introductory paragraph of this section. We define the indeterminates
\begin{equation}\label{eq:indets}
\begin{split}
    &w_{j+1}(n)=s_1(n+j),\,\, \text{ for }\,\, 0\leq j < n_1\\
    &w_{j+1}(n)=s_2(n+j),\,\, \text{ for }\,\, n_1\leq j < n_1+n_2\\
    &\vdots\\
    &w_{j+1}(n)=s_N(n+j),\,\, \text{ for }\,\, M-n_N\leq j < M.
\end{split}
\end{equation}
Observe that for $j\in{1,\ldots,M} \setminus \{n_1,n_1+n_2,\ldots,M\}$, $\sigma(w_j(n))=w_{j+1}(n)$, and $\sigma(w_{M_{i+1}}(n))$ satisfies
\begin{equation}\label{eq:eqindets2}
    p_{i+1}\left(w_{M_i+1}(n),\ldots,w_{M_{i+1}}(n),\sigma(w_{M_{i+1}}(n))\right)=0,
\end{equation}
where $M_i\coloneqq \sum_{j=0}^i n_i$, for $i=0,\ldots,N-1$ and $n_0=0.$ The index variable can now be regarded implicitly, i.e., we can write $w_j$ for $w_j(n)$ since its shifts are well understood.

For $i=0\ldots,N-1$, we write $c_{m_i+1}=\Init_{p_{i+1}}$, and $p_{i+1}$ as (see \eqref{eq:indegp1})
\begin{equation}\label{eq:eqpjmj}
    c_{m_{i+1}}\,{\sigma(w_{M_{i+1}})}^{m_j} + p_{i+1,1}\left(w_{M_i+1},\ldots,w_{M_{i+1}},\sigma(w_{M_{i+1}})\right).
\end{equation}
 We can now view the problem as resulting from the following {\it radical-rational dynamical} system
\begin{equation}\label{eq:radratdyn}
   \begin{cases}
    \sigma(\mathbf{w})^{\mu} = \mathbf{A}(\mathbf{w}) + \mathbf{E}_{\mathbf{A}}(\sigma(\mathbf{w}))\\
    z = B(\mathbf{w})
    \end{cases}\coloneqq \begin{cases}
    &{\sigma(w_1)}^{\mu_1}=A_1(w_1,\ldots,w_M) + E_{A_1}(\sigma(w_1))\\
    &\vdots\\
    &{\sigma(w_{M})}^{\mu_M}=A_M(w_1,\ldots,w_M)+E_{A_M}(\sigma(w_M))\\
    &z = B(w_1,\ldots,w_M)
\end{cases}, \,\quad\,
(\mathcal{M}_f) 
\end{equation}
where 
\begin{itemize}
    \item $\mu_i,i=1,\ldots,M$, is either $1$ or one of the $m_i, i=1,\ldots,N$; 
    \item $A_i$ (with numerator $a_i$) is either $w_{i+1}$ or the part of $p_{i,1}/c_{m_i}$ that is free of $\sigma(w_i)$ $i=1,\ldots,N$;
    \item $E_{A_i}(\sigma(w_i))$ (with numerator $e_{a_i}$) is either $0$ or the part of $p_{i,1}/c_{m_i}$ that contains $\sigma(w_i)$ (in its numerator), $i=1,\ldots,N$;
    \item $B(w_1,\ldots,w_M)=f(w_1,w_{n_1+1},\ldots,w_{\sum_{i=1}^{N-1}n_i+1})$.
\end{itemize}

We refer to $M$ as the dimension of $(\mathcal{M}_f)$.

\begin{remark} When the $N$ given difference polynomials are rationalizing, i.e., $m_i=1,$ $i=1,\ldots, N$, and $E_{A_i}=0$, $(\mathcal{M}_f)$ is a classical rational dynamical system often considered in theoretical computer science \cite{clemente2023rational}.
\end{remark}

Let $Q$ be the common denominator of all denominators in the system. Without loss of generality, we assume that all $A_i$ and $E_{A_i}$ are written with $Q$ as the denominator and consider that $B=b/Q$. This $Q$ carries the condition from \eqref{eq:arithmcond}. We may regard $(\mathcal{M}_f)$ as a system of difference polynomials on the multivariate ring of difference polynomials $\DD_{\mathbf{w},z}\coloneqq\DD_{w_1,\ldots,w_M,z}$. The corresponding difference ideal is

\begin{equation}\label{eq:eq9}
    I_{\mathcal{M}_f}\coloneqq \langle Q{\sigma(\mathbf{w})}^{\mu}-\mathbf{a}(\mathbf{w}) - \mathbf{e}_{\mathbf{a}}(\sigma(\mathbf{w})),\, Q\,z-b(\mathbf{w}) \rangle \colon H^{\infty} \subset \DD_{\mathbf{w},z},
\end{equation}
where $H\coloneqq \{Q,\sigma(Q),\sigma^2(Q),\ldots\}$, and ``$\colon H^{\infty}$'' denotes the saturation with $H$.  

\Cref{algo:Algo1} gives the steps of our approach for the arithmetic of D-algebraic sequences.

\begin{algorithm}[ht]\caption{Arithmetic of difference-algebraic sequences}\label{algo:Algo1}
    \begin{algorithmic} 
    \\ \Require $N$ difference polynomials $p_i\in\DD_{s_i}$ of order $n_i$ and a function $f\in\KK(X_1,\ldots,X_n)$.
    \Ensure A difference polynomial of order at most $M\coloneqq n_1+\cdots+n_N$ that vanishes at $f(s_1,\ldots,s_N)$ for appropriate values of $n$, where each $s_i$ is a D-algebraic zero of $p_i$.
    \begin{enumerate}
    \item Construct $(\mathcal{M}_f)$ from the input $p_1,\ldots,p_N$ as in \eqref{eq:radratdyn}.
    \item Denote by $\mathcal{E}$ the set  
        \begin{eqnarray*}
             \mathcal{E}&\coloneqq& \lbrace Q\,{\sigma(\mathbf{w})}^{\mu}-\mathbf{a}(\mathbf{w})-\mathbf{e}_{\mathbf{a}}(\mathbf{w}'), Q \, z - b(\mathbf{w})\rbrace \\
                        &=& \big\{ Q\,{\sigma(w_i)}^{\mu_i}-a_i(w_1,\ldots,w_M) - e_{a_i}(\sigma(w_i)), i=1,\ldots,M,\, Q\,z - b(w_1,\ldots,w_M)\big\}, 
        \end{eqnarray*}


        \item Compute the first $M-1$ shifts (application of $\sigma$) of all polynomials in $\mathcal{E}$ and add them to $\mathcal{E}$.
        \item Compute the $M$th shift of $Q\, z - b(w_1,\ldots,w_M)$ and add it to $\mathcal{E}$. We are now in the ring $\KK[\sigma^{\leq M}(w_1),\ldots,\sigma^{\leq M}(w_M),\sigma^{\leq M}(z)]$, which contains all differential polynomials of order at most $M$ in $\DD_{\mathbf{w},z}$.
        \item Let  $I\coloneqq\langle \mathcal{E} \rangle \subset \KK[\sigma^{\leq M}(w_1),\ldots,\sigma^{\leq M}(w_M),\sigma^{\leq M}(z)]$ be the ideal generated by the elements of~$\mathcal{E}$.
        \item Let $H\coloneqq \{Q,\sigma(Q),\ldots,\sigma^M(Q)\}$.
        \item Update $I$ by its saturation with $H$, i.e, $I\coloneqq I\colon H^{\infty}$.
        \item\label{step:elimalg1} Compute the elimination ideal $I \cap \KK[\sigma^{\leq M}(z)]$. From the resulting Gr\"{o}bner basis, choose a polynomial $q$ of the lowest degree among those of the lowest order. 
        \item Return $q$.
    \end{enumerate}	
    \end{algorithmic}
\end{algorithm}

To prove the correctness of \Cref{algo:Algo1}, we must show that $\langle \sigma^{\leq M} \left(I_{\mathcal{M}_f}\right) \rangle \cap \KK[\sigma^{\infty}(z)]$ in step \ref{step:elimalg1} is a non-trivial elimination ideal. The following theorem establishes this fact.

\begin{theorem}\label{th:theo1} On the commutative ring $\KK[\sigma^{\infty}(\mathbf{w}),\sigma^{\infty}(z)]$, seen as a polynomial ring in infinitely many variables, consider the lexicographic monomial ordering corresponding to any ordering on the variables such that
\begin{itemize}
\item[(i.)] $\sigma^{j_1}(z)\succ \sigma^{j_2}(w_i)$, $i, j_1, j_2\in \NN$,
\item[(ii.)] $\sigma^{j+1}(z)\succ \sigma^j(z)$, $\sigma^{j+1}(w_{i_1})\succ \sigma^j(w_{i_2})$, $i_1, i_2,j\in\NN$.
\end{itemize}
Then the set $\mathcal{E}\coloneqq\left\lbrace Q{\sigma(\mathbf{w})}^{\mu}-\mathbf{a}(\mathbf{w})-\mathbf{e}_{\mathbf{a}}(\sigma(\mathbf{w})),\, Qz-b(\mathbf{y}) \right\rbrace$ is a triangular set with respect to this ordering. Moreover,
\begin{equation}\label{eq:eq10}
    \langle \sigma^{\leq M} \left(I_{\mathcal{M}_f}\right) \rangle \cap \KK[\sigma^{\infty}(z)] \neq \langle 0 \rangle.
\end{equation}
\end{theorem}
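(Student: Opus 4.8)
The plan is to verify the two claims in turn: first that $\mathcal{E}$ is a triangular set with respect to the specified lexicographic ordering, and second that eliminating all the $\mathbf{w}$-variables and their shifts leaves a nonzero polynomial in the $\sigma^{\le M}(z)$.

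For the triangularity claim, I would identify the leading variable of each generator of $\mathcal{E}$ under the ordering. For the $i$-th dynamical equation $Q\,\sigma(w_i)^{\mu_i} - a_i(\mathbf{w}) - e_{a_i}(\sigma(w_i))$, the variable $\sigma(w_i)$ dominates every $w_1,\dots,w_M$ by clause (ii.), and since $e_{a_i}(\sigma(w_i))$ involves $\sigma(w_i)$ to degree strictly less than $\mu_i$ (here I would invoke \Cref{prop:orddegI}, applied to $p_{i+1}$, to control the order and degree of the initial $c_{m_{i+1}}$ and of $e_{a_i}$), the leading term is $Q\,\sigma(w_i)^{\mu_i}$ with leading variable $\sigma(w_i)$. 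Similarly $Qz - b(\mathbf{w})$ has leading variable $z$ by clause (i.). Then applying $\sigma^k$ for $0\le k\le M-1$ to each generator, and $\sigma^M$ to the $z$-relation, merely shifts these leading variables to $\sigma^{k+1}(w_i)$ and $\sigma^{k+1}(z)$ respectively; by clauses (i.) and (ii.) these leading variables are pairwise distinct across all the generators of $\mathcal{E}$, and each generator is (at most) linear — in fact a power times $Q$ plus lower terms — in its own leading variable, which is exactly the triangular-set condition. I should be careful that $Q$ itself, being a product of the $\Init_{p_i}$ and $h$, does not reintroduce a higher-ordered $\mathbf{w}$-variable; this follows because $Q$ depends only on $w_1,\dots,w_M$ (no shifts) while the leading variable of the $i$-th equation is $\sigma(w_i)$, which is strictly larger.

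For the nonvanishing claim \eqref{eq:eq10}, the idea is a dimension/transcendence-degree count. Over the generic point, $w_1,\dots,w_M$ are algebraically independent (each $p_i$ being irreducible of order $n_i$ contributes exactly $n_i$ free coordinates, and the sequences are independent), so the field $\KK(\sigma^{\le M}(\mathbf{w}), \sigma^{\le M}(z))$ obtained by adjoining the $M+1$ values $z, \sigma(z),\dots,\sigma^M(z)$ via the relations in $\mathcal{E}$ has transcendence degree at most $M$ over $\KK$ — indeed once $w_1,\dots,w_M$ are fixed, the triangular relations determine $\sigma(w_i)$ up to finitely many choices and determine each $\sigma^k(z)$, so the whole extension is algebraic of transcendence degree $\le M$ over $\KK(w_1,\dots,w_M)$, hence $\le M$ over $\KK$ in total for the $z$-tower after accounting that $w$'s themselves carry $M$ of that budget. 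Consequently the $M+1$ elements $z,\sigma(z),\dots,\sigma^M(z)$ cannot be algebraically independent over $\KK$: there is a nonzero polynomial relation among them with $\KK$-coefficients, which is precisely a nonzero element of $\langle \sigma^{\le M}(I_{\mathcal{M}_f})\rangle \cap \KK[\sigma^{\le M}(z)]$. To turn this into a statement about the ideal rather than about field extensions, I would argue that such an algebraic relation, cleared of denominators (the denominators being powers of shifts of $Q$, which are invertible after saturation by $H$), lies in the saturated ideal; the saturation in \eqref{eq:eq9} is exactly what makes this legitimate.

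The main obstacle I anticipate is the transcendence-degree bookkeeping in the second part: one must be precise about what "generic zero" buys us, namely that the $w_i$ corresponding to the initial coordinates of each $s_i$ are genuinely algebraically independent, and that the radical branches $\sigma(w_i)$ (from $\mu_i > 1$) do not secretly increase the transcendence degree — they are algebraic over the preceding variables by construction, so they do not, but this needs to be said carefully, especially since after saturation by $H$ the quotient ring may not be a domain and one should pass to an appropriate component or localize. A clean way around the domain issue is to work directly with the difference-field extension generated by a fixed generic zero of the input system (which exists by \Cref{th:cohn} applied componentwise, or by the construction of $(\mathcal{M}_f)$), compute the transcendence degree there, and then observe that the minimal polynomial of $\sigma^M(z)$ over $\KK(z,\dots,\sigma^{M-1}(z))$ — or the algebraic dependence relation among all $M+1$ shifts — pulls back to a nonzero element of the elimination ideal. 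The triangularity part, by contrast, should be essentially a direct verification once the leading-variable analysis is set up, with \Cref{prop:orddegI} doing the work of guaranteeing the initials and the $E_{A_i}$ have strictly smaller degree in the relevant leading variable.
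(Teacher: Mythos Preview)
Your proposal is correct and follows essentially the same line as the paper's proof: verify triangularity by identifying the pairwise distinct leading variables $\sigma^{j+1}(w_i)$ and $\sigma^j(z)$, then conclude \eqref{eq:eq10} by a transcendence-degree count ($M$ free parameters against $M+1$ shifts of $z$). The one place the paper is tidier is precisely your flagged ``domain issue'': rather than passing to a generic zero and pulling back a minimal polynomial, the paper invokes \cite[Theorem~4.4]{hubert2003notes}, which guarantees that for a consistent triangular set every associated prime of the saturated ideal has the non-leading variables $\{w_1,\dots,w_M\}$ as a transcendence basis---so the quotient has transcendence degree exactly $M$ without any component-by-component or localization argument, and your anticipated obstacle disappears in one citation.
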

\begin{proof} First, let us mention that the system $(\mathcal{M}_f)$ is consistent by \Cref{th:theo1} from the individual input difference polynomials. The leading monomials of 
\begin{align*}
    &\sigma^j\left(Q\,\sigma(w_i)^{\mu_i}-a_i(\mathbf{w})-e_{a_i}(\sigma(w_i))\right) \,\text{ and }\, \sigma^j\left(Q\,z-b(\mathbf{w})\right)
\end{align*}
 in the ring $\KK[\sigma^{\infty}(\mathbf{w}),\sigma^{\infty}(z)]$ have highest variables $\sigma^{j+1}(w_i)$ and $\sigma^j(z)$, respectively. Since these variables are all distinct, by definition (see \cite[Definition 4.1]{hubert2003notes}), we deduce that $\mathcal{E}$ is a consistent triangular set with coefficients in the field $\KK$. As a triangular set, $\mathcal{E}$ defines the ideal $\langle \mathcal{E}\rangle:H^{\infty}=I_{\mathcal{M}_f}$, where $H~\coloneqq~\{Q,\sigma(Q),\ldots,\sigma^M(Q)\}$. Therefore by \cite[Theorem 4.4]{hubert2003notes}), all associated primes of $\sigma^{\leq M}(I_{\mathcal{M}_f})$ share the same transcendence basis given by the non-leading variables $\{w_1,\ldots,w_M\}$ in $\sigma^{\leq M}(I_{\mathcal{M}_f})$. Thus the transcendence degree of $\KK[\sigma^{\leq M}(\mathbf{w}),\sigma^{\leq M}(z)]/\langle \sigma^{\leq M}\left(I_{\mathcal{M}_f}\right) \rangle$ over $\KK$ is $M$. However, the transcendence degree of $\KK(x)[\sigma^{\leq M}(z)]$ is $M+1$. Hence we must have $\langle \sigma^{\leq M}\left(I_{\mathcal{M}_f}\right) \rangle \cap \KK[\sigma^{\infty}(z)] \neq \langle 0 \rangle$.
\end{proof}

Observe that $M$ is the minimal integer for which the argument of the proof of \Cref{th:theo1} holds. 

\subsection{Some closure properties and examples}

We present some immediate consequences of the result from the previous subsection, including the ring structure of $\KK^{\aleph_0}$, induced by addition and multiplication. We show how to use \Cref{algo:Algo1} beyond arithmetic properties. The central fact resulting from \Cref{th:theo1} is that building a dynamical system in the form of $\mathcal{M}_f$ in \eqref{eq:radratdyn} is enough to show the existence of a difference polynomial vanishing at sequences' transformations encoded by some $f$. We exploit the fact that the form of the system is unchanged by adding new variables and raising them to some powers on its left-hand side.

We say that a sequence $(s(n))_{n\in\NN}$ (or simply $(s(n))_n$ if there is no ambiguity) is D-algebraic of order $r$ if it is defined by an irreducible difference polynomial of order $r$.

\begin{corollary}\label{cor:cor1} Let $(u(n))_n$ and $(v(n))_n$ be two difference-algebraic sequences of order $r_1$ and $r_2$, respectively, and $i,j\in\NN, i< r_1,\, j< r_2$. Then, the following sequences are difference-algebraic of order $r$:
\begin{enumerate}
    \item addition: $(u(n+i)+v(n+j))_n$, with  $r\leq r_1+r_2$;
    \item multiplication: $(u(n+i)\,v(n+j))_n$, with $r\leq r_1+r_2$;
    \item division: $(1/v(n+j))_{n\geq n_0}$, with $r\leq r_2$, $v(n)\neq 0$ for all $n\geq n_0\in\NN$;
    \item taking radicals: $(\sqrt[N]{u(n+i)}\,)_n,N\in\NN$, with $r\leq r_1$;
    \item partial product: $(\prod_{k=k_0}^n u(k+i))_n, k_0\in\NN$, with  $r\leq r_1+1;$
    \item partial sum: $(\sum_{k=k_0}^n u(k+i))_n,\, k_0\in\NN$, with $r\leq r_1+1$;
\end{enumerate}
\end{corollary}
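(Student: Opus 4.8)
\textbf{Proof proposal for \Cref{cor:cor1}.}

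The plan is to derive each item by exhibiting an explicit function $f$ (rational or, for radicals, via the device of an extra indeterminate raised to a power) and building the associated system $(\mathcal{M}_f)$, so that \Cref{th:theo1} guarantees a nontrivial difference polynomial vanishing at the transformed sequence. Items (1)--(3) are the most immediate: for addition take $N=2$, $p_1,p_2$ the defining polynomials of shifts $u(n+i)$ and $v(n+j)$ (which again have orders $r_1$ and $r_2$, since shifting an irreducible difference polynomial preserves irreducibility and order), and $f(X_1,X_2)=X_1+X_2$; similarly $f=X_1X_2$ for multiplication and $f=1/X_2$ with $N=1$ for division. In each case $M=n_1+n_2\le r_1+r_2$ (or $M=r_2$ for division), so \Cref{th:theo1} produces $q\in\KK[\sigma^{\infty}(z)]\setminus\{0\}$ of order at most $M$; one then extracts an irreducible factor that $z=f(\ldots)$ actually satisfies and checks the regularity condition holds for large $n$, which follows from \eqref{eq:arithmcond} since each $\Init_{p_i}$ is eventually nonzero along the D-algebraic sequence and the denominator $h$ (equal to $1$, $1$, or $X_2$ respectively) is eventually nonzero — for division this is exactly the hypothesis $v(n)\ne 0$ for $n\ge n_0$.

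For radicals (item 4), the key trick is to \emph{not} eliminate a root symbolically but to augment the system: introduce one new indeterminate $w_{M+1}$ with the relation $w_{M+1}^N = w_1$ (taking $p_1$ to define $u(n+i)$, of order $r_1$), set $z=w_{M+1}$, and observe that this is still of the admissible form for $(\mathcal{M}_f)$ — a power of a shift on the left, a rational expression of the $w$'s on the right — as the paragraph before the corollary emphasizes that adding variables and raising them to powers does not change the shape of the system. The dimension is then $M=r_1$ (the new variable $w_{M+1}$ is the ``$z$'' of the system, not counted in $M$), so \Cref{th:theo1} yields a difference polynomial of order $\le r_1$; a choice of branch makes $(\sqrt[N]{u(n+i)})_n$ a genuine D-algebraic zero.

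Items (5) and (6) require one more idea: partial sums and products are not of the form $f(\text{finitely many shifts})$ of $u$ alone, so I would introduce the auxiliary sequence $t(n)=\prod_{k=k_0}^n u(k+i)$ (resp. $\sum_{k=k_0}^n u(k+i)$), which satisfies the first-order rational recursion $t(n+1)=u(n+1+i)\,t(n)$ (resp. $t(n+1)=u(n+1+i)+t(n)$). One then feeds into the construction the $M=r_1$ equations coming from $p_1$ for $u$ together with this one extra equation $\sigma(w_{M+1})=w_{1+i}\cdot w_{M+1}$ (a shift of $u$ times the new variable) and eliminates all $w$'s, giving a system of dimension $M+1=r_1+1$ with output of order $\le r_1+1$; setting $z=w_{M+1}$ recovers $t(n)$ directly. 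The main obstacle I anticipate is not the existence part (which \Cref{th:theo1} hands over once the system is correctly shaped) but the bookkeeping needed to confirm in each case that (a) the resulting polynomial's order does not exceed the claimed bound — this is where one must be careful that the auxiliary $z$-variable is genuinely outside the transcendence-basis count of size $M$ in the proof of \Cref{th:theo1} — and (b) the transformed sequence satisfies the regularity condition, i.e. the initial of a suitable irreducible factor of $q$ is eventually nonzero, which reduces via \eqref{eq:arithmcond} to the eventual non-vanishing of denominators that are built only from $u,v$ and, for division, exactly the stated hypothesis.
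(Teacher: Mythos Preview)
Your proposal is correct and follows essentially the same route as the paper's proof: items (1)--(3) via $f=X_1+X_2$, $X_1X_2$, $1/X$; item (4) by replacing the output equation $Qz=b$ with $z^N=w_1$ (your ``new indeterminate $w_{M+1}$ with $w_{M+1}^N=w_1$ and $z=w_{M+1}$'' is exactly the paper's ``$z$ replaced by $z^N$''); and items (5)--(6) by adjoining one extra variable satisfying the first-order recursion for the partial product or sum, raising the system dimension to $r_1+1$. Two minor remarks: your recursion index $w_{1+i}$ in item (5) should be checked against the convention $t(n+1)=u(n{+}1{+}i)\,t(n)$ (the paper sidesteps this by taking $i=0$ and using $t(n+1)=t(n)\,u(n)$ with $t(0)=1$); and the paper additionally records, as side observations, the direct substitutions $(\sigma^j(x))^i\mapsto(\sigma^j(x))^{iN}$ for radicals and $x\mapsto\sigma(x)-x$ for partial sums, which give the result without invoking \Cref{algo:Algo1}.
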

\begin{proof} The proofs of these properties are deduced from constructions of radical-rational dynamical systems $(\mathcal{M}_f)$ as in \eqref{eq:radratdyn} for some rational functions, here denoted $f$. We provide details for $i=j=0$ as the case $i,j\neq 0$ only implies a different encoding of $f$ with the variables of the dynamical system.
\begin{enumerate}
    \item $f(X,Y)\coloneqq X+Y$;
    \item $f(X,Y)\coloneqq X\,Y$;
    \item $f(X)\coloneqq 1/X$ and $(\mathcal{M}_f)$ built with the defining difference polynomial of $(v(n))_n$ only;
    \item This follows from the fact that the correctness of \Cref{algo:Algo1} is unchanged with having $z$ replaced by $z^N$, $N\in\NN$. Thus, $(\mathcal{M}_f)$ is built with $f(X)=X$ such that the output equation writes $z^N=w_1$. Note, however, that this is an elementary fact. Indeed, one verifies that the difference polynomial obtained by substituting $(\sigma^j(x))^i$ by $(\sigma^j(x))^{i\,N}$ in $p$ has $(\sqrt[N]{u(n)}\,)_n$ as a zero.
    \item\label{proof:series} Let us denote by $(t(n))_n$, the partial product of $(u(n))_n$. It is defined by the recursion $t(n+1)=t(n)\, u(n)$, with $t(0)=1$. First, consider the radical-rational dynamical system $(\mathcal{M}_f)$ constructed from the difference polynomial of $(u(n))_n$, with $f$ unspecified. At this stage, we know that for any $f$, we have $r_1$ components in the system. To take $(t(n))_n$ into account, we add a new variable $w_{r_1+1}$, such that $\sigma(w_{r_1+1})=w_{r_1+1}\,w_1$, which represents the recursion. Then, we choose the function $b$ of the system as $b=w_{r_1+1}$, which tells \Cref{algo:Algo1} that we want a difference polynomial for $(t(n))_n$. Hence, by construction, $(t(n))_n$ is D-algebraic of order at most $r_1+1$ as claimed. Some constraints of the resulting difference polynomial may define the value of $k_0$. 
    \item For the partial sum, say $(s(n))_n$, one considers the recursion $s(n+1)=s(n)+u(n)$, with $s(0)=0$, and proceeds in the same way as with the partial product. One can verify that the difference polynomial obtained by substituting $x$ by $\sigma(x)-x$ in the defining difference polynomial of $(u(n))_n$ belongs to the difference ideal of the output of the algorithm.
\end{enumerate}
\end{proof}
Applications of \Cref{algo:Algo1} reveal that a dynamical system in the form of $(\mathcal{M}_f)$ from \eqref{eq:radratdyn} serves as a certificate for computing an algebraic difference equation (or difference polynomial) associated with a given problem. 

In the examples below, the assumption related to \eqref{eq:arithmcond} is easily verified. Therefore, we will not pay particular attention to that required condition.
\begin{example}\label{ex:closure} Let $p,q\in\mathcal{D}_{\sigma}(\KK,x)$, such that $p=\sigma^2(x)-\sigma(x)\,x$, $q=\sigma(x)-x^2-x$, specialized with the generic solution $u(n)$ and $v(n)$, respectively. With the initial values $u(0)=1$, $u(1)=k\in\KK$, the algebraic difference equation associated to $p$ yields the sequence of general term $k^{F_n}$, where $F_n$ is the $n$th Fibonacci number (see, for instance, \href{https://oeis.org/A000301}{A000301} from \cite{sloane2003line}). With $v(0)=1,v(1)=2$, $v(n)$ is known to denote the number of ordered trees having nodes of outdegree $0,1,2$ and such that all leaves are at level $n$ (see \href{https://oeis.org/A007018}{A007018}). Let us find algebraic difference equations satisfied by $(u(n)/v(n))_n$, $(u(n)\,v(n))_n$, and $(\sum_{k=0}^n u(k))_n$. In all these cases, we will write the resulting equation with the undetermined term $s(n)$. 
\begin{enumerate}
    \item $(u(n)/v(n))_n$.

    We use the same notations from the previous subsection. The corresponding dynamical system is given by
    \begin{equation}\label{eq:sys1}
        \begin{cases}
            \sigma(w_1) = w_2\\
            \sigma(w_2) = w_1\,w_2\\
            \sigma(w_3) = w_3^2 + w_3\\
            z = \frac{w_1}{w_3}
        \end{cases}.
    \end{equation}
    After elimination, we obtain the following principal ideal
    \begin{equation}
    \begin{split}
        &\Big\langle\sigma(z)^{4} \sigma^3(z)^{2} z^{4}-z^{3}\sigma(z)^{4}\sigma^2(z)^{2}\sigma^3(z)-z^{3}\sigma(z)^{3}\sigma^2(z)\sigma^3(z)^{2}-z^{2} \sigma(z)^{3} \sigma^2(z)^{3} \sigma^3(z)\\
        &+z \sigma(z)^{3} \sigma^2(z)^{5}-z^{2} \sigma(z)^{2} \sigma^2(z)^{2} \sigma^3(z)^{2}+2 z \sigma(z)^{2} \sigma^2(z)^{4} \sigma^3(z)+\sigma(z)^{2} \sigma^2(z)^{6}\\
        &+z \sigma(z) \sigma^2(z)^{3} \sigma^3(z)^{2}+2 \sigma(z) \sigma^2(z)^{5} \sigma^3(z)+\sigma^2(z)^{4} \sigma^3(z)^{2}\Big\rangle,
    \end{split}
    \end{equation}
    where $\sigma^j(z)^i$ is understood as $(\sigma^j(z))^i$. Hence the algebraic difference equation
    \begin{equation}\label{eq:div1}
        \begin{split}
        &s(n+1)^{4}s(n+3)^{2}s(n)^{4}-s(n+1)^{4} s(n+3) s(n)^{3} s(n+2)^{2}+s(n+3)^{2} s(n+2)^{4}\\
        &-s(n+1)^{3} s(n+3)^{2} s(n)^{3} s(n+2)+s(n+1)^{3} s(n) s(n+2)^{5} +s(n+1)^{2} s(n+2)^{6}\\
        &+2 s(n+1)^{2} s(n+3) s(n) s(n+2)^{4}-s(n+1)^{2} s(n+3)^{2} s(n)^{2} s(n+2)^{2}\\
        &+s(n+1) s(n+3)^{2} s(n) s(n+2)^{3}+2 s(n+1) s(n+3) s(n+2)^{5}\\
        &-s(n+1)^{3} s(n+3) s(n)^{2} s(n+2)^{3}=0,\\
        \end{split}
    \end{equation}
    of order $3$ and degree $10$.
    \item $(u(n)\,v(n))_n$.
    
    The system is similar to \eqref{eq:sys1} with the last equation replaced by $z = w_1\,w_3$. We obtain a principal ideal with the following associated equation:
    \begin{equation}\label{eq:mul1}
    \begin{split}
        &s(n+2)^{2} s(n+1)^{4} s(n)^{4}+2 s(n+3) s(n+2) s(n+1)^{3} s(n)^{4}+s(n+2)^{3} s(n+1)^{3} s(n)^{3}\\
        &+s(n+3)^{2} s(n+1)^{2} s(n)^{4}+2 s(n+3) s(n+2)^{2} s(n+1)^{2} s(n)^{3}-s(n+2)^{4} s(n+1)^{2} s(n)^{2}\\
        &+s(n+3)^{2} s(n+2) s(n+1) s(n)^{3}-s(n+3) s(n+2)^{3} s(n+1) s(n)^{2}-s(n+2)^{5} s(n+1) s(n)\\
        &-s(n+3) s(n+2)^{4} s(n)+s(n+2)^{6}=0.
    \end{split}
    \end{equation}

    Observe that \eqref{eq:mul1} is also satisfied by the sequence $(v(n)/u(n))_n$ since $(1/u(n))_n$ is also a zero of $p$.

    \item $(\sum_{k=0}^n u(k))_n$.

    The corresponding system writes

    \begin{equation}\label{eq:sys2}
        \begin{cases}
            \sigma(w_1) = w_2\\
            \sigma(w_2) = w_1\,w_2\\
            \sigma(w_3) = w_3 + w_1\\
            z = w_3
        \end{cases}.
    \end{equation}
    Here again, we obtain a principal ideal. The associated equation is given by
    \begin{dmath}
        s(n) s(n+1)-s(n) s(n+2)-s(n+1)^{2}+s(n+1) s(n+2)+s(n+2)-s(n+3)=0.
    \end{dmath}
\end{enumerate}
The fact that we obtain principal ideals in all three examples is a common situation encountered in the differential case. It is explained in \cite[Remark 4]{dong2023differential} that such an ideal is generally ``almost principal''. \QEDA
\end{example}

We now deduce a result for sequence acceleration that follows from \Cref{cor:cor1}.

\begin{corollary} Let $(s(n))_{n\in\NN}$ be a D-algebraic sequence of order $r>2$. Assume that $\Delta^2s(n)\neq 0$ for sufficiently large $n$. The Aitken transformation of $(s(n))_{n\in\NN}$, say $(t(n))_{n\in\NN}$, of general term $t(n)\coloneqq s(n)-\frac{\left(\Delta s(n)\right)^2}{\Delta^2s(n)}$ is D-algebraic of order $r$.
\end{corollary}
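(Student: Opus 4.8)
The plan is to realize $(t(n))_{n\in\NN}$ as the image of the single D-algebraic sequence $(s(n))_{n\in\NN}$ under a rational transformation of three consecutive shifts, and then to invoke \Cref{algo:Algo1} (with $N=1$) together with \Cref{th:theo1}. First I would rewrite the Aitken term \eqref{eq:aitken} by clearing the denominator inside the numerator, which collapses it neatly to
\[
t(n) = \frac{s(n)\,s(n+2)-s(n+1)^{2}}{s(n+2)-2\,s(n+1)+s(n)} = f\!\left(s(n),s(n+1),s(n+2)\right),
\]
with $f(X_1,X_2,X_3)=\dfrac{X_1 X_3-X_2^{2}}{X_3-2X_2+X_1}$, a quotient of coprime polynomials whose denominator $h$, evaluated at $\left(s(n),s(n+1),s(n+2)\right)$, is exactly $\Delta^2 s(n)$. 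Hence the hypothesis $\Delta^2 s(n)\neq 0$ for sufficiently large $n$, combined with the regularity condition on the defining polynomial of $(s(n))_{n\in\NN}$, is precisely condition \eqref{eq:arithmcond} for this construction; this is the same kind of encoding already used in \Cref{cor:cor1} for shifted arguments.

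Next I would take the irreducible difference polynomial $p$ of order $r$ defining $(s(n))_{n\in\NN}$ and build the radical-rational dynamical system $(\mathcal{M}_f)$ of \eqref{eq:radratdyn} from $p$ alone; since there is a single input sequence, its dimension is $M=r$. The place where the hypothesis $r>2$ enters is that the coordinate variables $w_1,\dots,w_r$ of $(\mathcal{M}_f)$ then already contain $w_1=s(n)$, $w_2=\sigma(w_1)=s(n+1)$ and $w_3=\sigma(w_2)=s(n+2)$, so the output function $B=f(w_1,w_2,w_3)$ is an admissible choice that makes no use of the recursion encoded by $p$, and \Cref{algo:Algo1} applies verbatim. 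By \Cref{th:theo1}, the elimination ideal $\langle\sigma^{\leq M}(I_{\mathcal{M}_f})\rangle\cap\KK[\sigma^{\infty}(z)]$ is nonzero, so it furnishes a difference polynomial $q$ of order at most $M=r$ that vanishes at $t(n)$ for all $n$ large enough that the relevant shifts of the common denominator $Q$ (equivalently, of $\Delta^2 s$) do not vanish; this already yields D-algebraicity of order at most $r$, as asserted.

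The step I expect to be the main obstacle is the final one: certifying that $q$, or an appropriate irreducible factor $q_0$ of it, genuinely \emph{defines} $(t(n))_{n\in\NN}$ in the sense of \Cref{def:dalgdef} — that is, that $(t(n))_{n\in\NN}$ is a generic zero of $q_0$ and meets the regularity condition $\Init_{q_0}(t(n))\neq 0$ for large $n$ — rather than merely being annihilated by $q$. As elsewhere in the paper, I would handle this by tracking how the genericity of $(s(n))_{n\in\NN}$ as a generic zero of $p$ propagates through the saturation by $H$ and the elimination, the crux again being the input non-vanishing hypothesis $\Delta^2 s(n)\neq 0$. Alternatively, one could route the argument through \Cref{cor:cor1} — division by $\Delta^2 s$, then a shift and field operations — to obtain the existence of such a $q$ less directly.
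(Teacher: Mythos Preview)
Your proposal is correct and is precisely the unpacking of the paper's one-line argument: the paper merely states that the result ``follows from \Cref{cor:cor1}'', and what you have written is exactly how that deduction goes --- a single input sequence, $B=f(w_1,w_2,w_3)$ using the shift-encoding already noted in the proof of \Cref{cor:cor1}, dimension $M=r$, and the hypothesis $r>2$ ensuring that the needed shifts $0,1,2$ lie below $r$ so that $w_1,w_2,w_3$ are available. Your discussion of the regularity/genericity step is in fact more careful than the paper's own treatment, which leaves that point implicit; there is no further idea you are missing.
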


\begin{example}[An example of Aitken acceleration \cite{griffiths2016100}]\label{ex:aitarithmexpl} In his {\it example of Aitken acceleration}, Greffins considered a logical extension of Fibonacci's rabbit problem in which two neighboring rabbit populations are, in addition to growing, competing for units of grassland. The probabilistic process that results from this scenario potentially gives rise
to two random walks involving the Fibonacci numbers. We focus on the second, whose expectation is given by
\begin{equation*}
\EE_{n} = -2 \sum_{k=2}^n \frac{1}{F_{k}F_{k+1}},\, n\geq 2,
\end{equation*}
where $F_n$ is the $n$th Fibonacci number. A concise description of the problem was given in terms of an equivalent process involving black and white discs and an urn (see \cite[second and third paragraphs]{griffiths2016100}). The sequence of interest here is $S(n)=-\frac{1}{2} \EE_{n}$, which is convergent as a consequence of the exponential-type behavior of the Fibonacci numbers. The Fibonacci sequence $(F_n)_{n\in\NN}$ is a D-algebraic zero of the difference polynomial $p\coloneqq \sigma^2(x)-\sigma(x)-x$, with initial values $F_0=0,\, F_1=1$. We use closure properties to find an algebraic difference equation satisfied by $(S(n))_{n\in\NN}$. From that equation, we compute an ADE satisfied by the Aitken transformation of $(S(n))_{n\in\NN},$ which we denote $(T(n))_{n\in\NN}$.
\begin{enumerate}
    \item $(u(n))_{n\in\NN}\coloneqq\left(\frac{1}{F_n\,F_{n+1}}\right)_{n\geq 2}$ satisfies the following second-order quadratic algebraic difference equation. 
    \begin{equation}
        -s\! \left(n\right) s\! \left(n+1\right)+2 s\! \left(n\right) s\! \left(n+2\right)+s\! \left(n+1\right)^{2}-s\! \left(n+1\right) s\! \left(n+2\right)=0,
    \end{equation}
    with initial terms $u(0)\coloneqq\frac{1}{2},\,u(1)\coloneqq\frac{1}{6}.$ Here, the corresponding dynamical system is similar to that of the reciprocal, with the difference that the function $f$ has the form $1/(w_1\,w_2),$ where $\sigma(w_1)=w_2$. 
    \item $(S(n))_{n\in\NN}$ satisfies a quadratic ADE of order $3$. We write the ADE as a rational recursion to highlight that the corresponding initial does not vanish at $(S(n))_{n\in\NN}$ as it is not a $C$-finite sequence.
    \begin{dmath}
        s\! \left(n+3\right)=-\frac{s\! \left(n\right) s\! \left(n+1\right)-3 s\! \left(n\right) s\! \left(n+2\right)-2 s\! \left(n+1\right)^{2}+6 s\! \left(n+1\right) s\! \left(n+2\right)-2 s\! \left(n+2\right)^{2}}{2 s\! \left(n\right)-3 s\! \left(n+1\right)+s\! \left(n+2\right)}
    \end{dmath}
    As initial terms we have $S(0)\coloneqq\frac{1}{2},\, S(1)\coloneqq\frac{2}{3}.$
    \item $(T(n))_{n\in\NN}$ is a solution of the following third-order ADE of degree $4$.
    \begin{multline}\label{eq:aitkenfib}
     s(n)^2\,s(n + 1)^2 + 6\,s(n)^2\,s(n + 1)\,s(n + 2) - 8\,s(n)^2\,s(n + 1)\,s(n + 3) + 9\,s(n)^2\,s(n + 2)^2\\
     - 24\,s(n)^2\,s(n + 2)\,s(n + 3) + 16\,s(n)^2\,s(n + 3)^2 - 12\,s(n)\,s(n + 1)^2\,s(n + 2) + 10\,s(n)\,s(n + 1)^2\,s(n + 3)\\
     - 16\,s(n)\,s(n + 1)\,s(n + 2)^2 + 44\,s(n)\,s(n + 1)\,s(n + 2)\,s(n + 3) - 24\,s(n)\,s(n + 1)\,s(n + 3)^2\\
     - 4\,s(n)\,s(n + 2)^3 + 10\,s(n)\,s(n + 2)^2\,s(n + 3) - 8\,s(n)\,s(n + 2)\,s(n + 3)^2 + 4\,s(n + 1)^3\,s(n + 2)\\
     - 4\,s(n + 1)^3\,s(n + 3) + 8\,s(n + 1)^2\,s(n + 2)^2 - 16\,s(n + 1)^2\,s(n + 2)\,s(n + 3)\\
     + 9\,s(n + 1)^2\,s(n + 3)^2 + 4\,s(n + 1)\,s(n + 2)^3 - 12\,s(n + 1)\,s(n + 2)^2\,s(n + 3)\\
     + 6\,s(n + 1)\,s(n + 2)\,s(n + 3)^2 + s(n + 2)^2\,s(n + 3)^2 = 0.        
    \end{multline}
    Despite its relatively big size, \eqref{eq:aitkenfib} enables the computation of terms of $(T(n))_{n\in\NN}$ by finding rational roots of relatively simple quadratic polynomials. For the three needed initial terms of $(T(n))_{n\in\NN}$, we need $5$ consecutive terms of $(S(n))_{n\in\NN}$. Using $S(j),j=0,\ldots,4$ we get $T(0)\coloneqq \frac{7}{9}, T(1)\coloneqq \frac{58}{75}, T(2)\coloneqq\frac{743}{960}.$ Plugging these values into \eqref{eq:aitkenfib} for $s(n), s(n+1),$ and $s(n+2)$, respectively, yields the following factored quadratic equation to compute $T(3)$.
    \begin{equation}
        \left(108300 X-83797\right) \left(10140 X-7847\right)=0.
    \end{equation}
    The desired next term is the root obtained with the positive square root of the discriminant; this is $T(3)\coloneqq\frac{7847}{10140}\approx 0.7738658777$, which is correct to $4$ decimal places with respect to the limit. This level of accuracy is achieved at the $9$th term of $(S(n))_{n\in\mathbb{N}}$.

    Table 1 in \cite{griffiths2016100} presents a comparative analysis of the convergence rates for $(S(n))_n$ and $(T(n))_n$, demonstrating an example of acceleration achievable through Aitken's delta-squared process. Finding an ADE satisfied by the accelerating sequence allows for the computation of its terms independently, without needing to refer back to the original sequence. \QEDA
\end{enumerate}
\end{example}

\section{D-algebraic rational recursions}

In this section, we focus on two subclasses of D-algebraic sequences, namely holonomic and $C^2$-finite sequences. We show that generic sequences from both classes are D-algebraic zeros of rationalizing difference polynomials, defined as difference polynomials that are linear in their highest shift terms (see \Cref{def:diffpoly}). As in the previous section, we concentrate on simple D-algebraicity, which corresponds to sequences entirely defined by an irreducible difference polynomial. By generic sequences in these subclasses, we want to exclude any behavior arising, for instance, from repeated values within the sequence terms, which may require polymorphic D-algebraicity where one of the irreducible difference polynomials is rationalizing. The power series coefficients of $\arctan$ (which exhibit repeated zeros) provide one such example, as does the D-algebraic continuation of Bernoulli numbers discussed in \Cref{ex:bernoullicont}. 

Application-wise, we note that rational recursions establish a natural connection to automata construction with rational updates in theoretical computer science \cite{AlurDDRY13,benedikt2017polynomial}.

\subsection{Holonomic sequences}\label{subsec:holorec}

Holonomic sequences are solutions to linear recurrence equations with polynomial coefficients in the index variable. These sequences are ubiquitous in the sciences. One reason may be that they share similar properties with their generating functions. Some interesting applications can be found in \cite{bostan2024algebraic,kauers2011concrete}.

In \cite{teguia2024rational}, Worrell and the author proposed an algorithm to convert any holonomic equation into a rationalizing algebraic difference equation. We revisit this result and complete its proof.

For this part, we replace the field $\KK$ with $\KK(n)$ and work on the specialized ring of difference polynomials $\DD_s(n)\coloneqq \KK(n)[\sigma^{\infty}(s(n))]$. This enables us to introduce the index variable $n$ in our difference polynomials. However, the aim is to derive a difference polynomial $p\in\DD_s=\KK[\sigma^{\infty}(s(n))]$ where the term $s(n+\ord(p))$ appears linearly.

We remind that our focus is on D-algebraic zeros, in the sense that for any rationalizing difference polynomial $p$ obtained from a holonomic difference polynomial, our interest is on sequence solutions $(s(n))_n$ such that the set $\{n\in\NN,\, \Initp(s(n))=0\}$ is finite. For holonomic difference polynomials, this is clear, so all their solutions are D-algebraic in $\DD_s(n).$ However, it requires some more work to construct a rationalizing difference polynomial in $\DD_s$ from a holonomic sequence. We provide some ideas of sequences that we would like to avoid in the example below.

\begin{example}\label{ex:arctanexpl} The sequence of power series coefficients of $\arctan(z)$, say $(a(n))_n$ satisfies the holonomic equation
\begin{equation}\label{eq:holeqarct}
    n s\! \left(n\right)+\left(n+2\right) s\! \left(n+2\right)=0,
\end{equation}
with initial terms $a(0)=0, a(1)=1$ (see \cite{BTWKsymbconv} and references therein for further details). Using the algorithm from \cite{teguia2024rational} to convert \eqref{eq:holeqarct} into a rationalizing ADE (or rational recursion) yields
\begin{equation}\label{eq:ratrecarct}
    s\! \left(n+3\right)=-\frac{s\! \left(n+1\right) \left(-s\! \left(n+2\right)+s\! \left(n\right)\right)}{s\! \left(n+2\right)+3 s\! \left(n\right)}.
\end{equation}
However, $(a(n))_n$ is not a D-algebraic solution of this ADE since all its terms of even indices are zero, forcing its initial to be zero infinitely many times. Nevertheless, starting a sequence solution of \eqref{eq:holeqarct} at $n\geq 1$ with two nonzero initial terms enables to define a holonomic solution of \eqref{eq:holeqarct} that is a D-algebraic solution of \eqref{eq:ratrecarct}. 

On the other hand, it is relatively simple to deduce a difference polynomial satisfied by $(a(n))_n$ by considering its sequences of zero terms and nonzero terms separately. For the zero terms, given $n\in\NN$, and $s(n)=0$, we want $s(n+2)=0$, so the equation is simply $s(n+2).$ For the nonzero terms, we convert the $2$-fold equation \eqref{eq:holeqarct} in to a $1$-fold equation using the change of variables $n\rightarrow 2n+1,$ $s(2n+1)\rightarrow s(n)$ (see \cite{BTWKsymbconv}). We obtain $(2n+1)s(n)+(2n+3)s(n+1)=0$, which we convert into the rationalizing difference polynomial
\begin{equation}
    (s(n+1)+2s(n))s(n+2)+s(n+1)s(n).
\end{equation}
Now we interpret $s(n),s(n+1)$ and $s(n+2)$ as $s(n),s(n+2)$ and $s(n+4)$, respectively. This enables taking the index gap with the zero terms into account. Hence $(a(n))_n$ is a D-algebraic zero of
\begin{equation}
    s(n+2)\,\left((s(n+2)+2s(n))s(n+4)+s(n+2)s(n)+s(n+1)\right),
\end{equation}
which can also be written as $\sigma^2(x)\left((\sigma^2(x)+2x)\sigma^4(x)+\sigma^2(x)x+\sigma(x)\right)\in\Dsigma$. The term $\sigma(x)$ (or $s(n+1)$) is added to ensure that only one factor vanishes at every $n\in\NN$. One can also obtain this difference polynomial using the ``D-algebraic continuation method'' exploited in \Cref{ex:bernoullicont}. \QEDA
\end{example}

\begin{definition} The holonomic degree of a difference polynomial $p\in\DD_s(n)$ of degree $1$ is the degree of $p$ viewed as a univariate polynomial in $n$. 
\end{definition}
When working with holonomic equations or sequences, we often use the word ``degree'' to refer to the holonomic degree. The order of the minimal holonomic difference polynomial satisfied by a sequence is the order of that holonomic sequence.

In \Cref{ex:arctanexpl}, the sequence of nonzero terms is a D-algebraic zero of a second-order rationalizing difference polynomial. This indicates that these terms can be entirely described by a rational recursion, despite the zero terms at odd indices. To avoid these case-by-case treatments of zeros of holonomic difference polynomials, we consider the concept of {\it almost all} for sequences defined by them. They have the particularity of using the full basis of solutions, unlike our previous example, where one term of the basis is killed by $0$ in the initial terms of the sequence.

\begin{definition}\label{def:almostall} Let $p\in\DD_s(n)$ be a holonomic difference polynomial of order $l_1,$ and $q\in\DD_s$ of order $l_2$, $l_2\geq l_1$. Let $\tilde{p}=\sigma^{N+1}(p),$ where $N$ is the maximum nonnegative integer root of the polynomial coefficients in $p.$ Let $(\epsilon_p(n))_{n>N}$ be the solution of $\tilde{p}$ with symbolic initial terms
\[\epsilon_p(j+N)=X_j\in\{X_1,\ldots,X_{l_1}\},\]
where the $X_j'$s are viewed as distinct polynomial variables not belonging to $\KK$. We say that almost all zeros of $p$ are D-algebraic zeros of $q$ if $(\epsilon_p(n))_{n>N}$ is a generic zero of $q$.
\end{definition}

Note that in \Cref{def:almostall}, $\epsilon_p(l_1+1),\ldots,\epsilon_p(l_2-1)$ are computed using $p$; and the generic zeros and D-algebraic zeros of $q$ coincide since the polynomials are free of $X_j$'s. In \Cref{ex:arctanexpl}, we have $N=0$, and the corresponding $(\epsilon(n))_{n>0}$ is given by
\begin{equation}
    \left(X_{1},X_{2},-\frac{X_{1}}{3},-\frac{X_{2}}{2},\frac{X_{1}}{5},\frac{X_{2}}{3},-\frac{X_{1}}{7},-\frac{X_{2}}{4},\frac{X_{1}}{9},\frac{X_{2}}{5},-\frac{X_{1}}{11},-\frac{X_{2}}{6},\frac{X_{1}}{13},\ldots\right).
\end{equation}
One verifies that this sequence can also be generated with \eqref{eq:ratrecarct} using the first three terms. The purpose of \Cref{def:almostall} is twofold: to highlight the dependence on the first terms and help demonstrate the non-triviality of the main result from \cite{teguia2024rational}.

\begin{definition}\label{def:almostall2} We say that almost every holonomic sequence is a D-algebraic zero of a family of difference polynomials $\mathcal{F}$, if for any holonomic difference polynomial $p,$ almost all the zeros $p$ are D-algebraic zeros of some $q\in\mathcal{F}.$
\end{definition}

Besides excluding particular solutions of holonomic equations, \Cref{def:almostall} and \Cref{def:almostall2} also neglect irrelevant difference polynomials satisfied by holonomic sequences. For instance, if a holonomic sequence satisfies a difference polynomial $p\in\DD_s$ of order $l$, then it also satisfies the rationalizing difference polynomial $q=\sigma^{l+2}(x)\sigma(p)+p$. However, the corresponding symbolic solution $(\epsilon_p(n))_{n>N}$ is not a generic zero of $q$, as it is a zero of $\Init_q$. 

\begin{theorem}\label{th:holoisratrec} Almost every holonomic sequence of order $l$ and degree $d$ is a D-algebraic zero of a rationalizing difference polynomial of order at most $l+d$ in $\DD_s$.   
\end{theorem}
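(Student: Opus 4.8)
The plan is to start from the minimal holonomic difference polynomial $p\in\DD_s(n)$ of order $l$ and degree $d$, shift it past all nonnegative integer roots of its coefficients (so that the initial-value problem for $\tilde p=\sigma^{N+1}(p)$ is regular and genuinely uses the full $l$-dimensional solution space, as in \Cref{def:almostall}), and then eliminate the variable $n$ to produce a difference polynomial in $\DD_s$. The natural elimination tool is the resultant: $p$ is linear in $\sigma^l(x)$ with coefficients polynomial in $n$ of degree $\le d$, so I would form a second equation by shifting, $\sigma(p)$, which involves $\sigma^{l+1}(x),\ldots,x$ with coefficients polynomial in $n$, and take $\res_n(p,\sigma(p))$ — or, more carefully, build a $(d+1)\times(d+1)$ Sylvester-type system from $p,\sigma(p),\ldots,\sigma^{d}(p)$ (which are $d+1$ equations, each linear in $n^0,n^1,\ldots$ only up to degree $d$, hence jointly a linear system in the $d+1$ "monomials in $n$" $1,n,\ldots,n^d$) and impose that the coefficient matrix be singular. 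The determinant of that matrix is a difference polynomial $q$ in $x,\sigma(x),\ldots,\sigma^{l+d}(x)$, giving order at most $l+d$ as claimed. One then checks $q$ is linear in its top shift $\sigma^{l+d}(x)$: the top shift occurs only in the last equation $\sigma^d(p)$, through $\sigma^{l+d}(x)$ appearing linearly there, so expanding the determinant along that row keeps $q$ linear in $\sigma^{l+d}(x)$ — i.e. $q$ is rationalizing.

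The second half of the argument is the genericity claim: that $(\epsilon_p(n))_{n>N}$, the symbolic-initial-condition solution of $\tilde p$, is actually a \emph{generic} zero of the irreducible factor of $q$ that it satisfies — equivalently, that $\Init_q$ does not vanish at $(\epsilon_p(n))_{n>N}$, so that $q$ (or its relevant irreducible component) defines a D-algebraic sequence in the sense of \Cref{def:dalgdefirr}. Here I would argue that $\Init_q$, being the determinant of the $d\times d$ minor obtained by deleting the row and column carrying $\sigma^{l+d}(x)$, is a difference polynomial of strictly smaller order and degree than $q$ (by \Cref{prop:orddegI} applied after establishing irreducibility of the relevant factor, or directly from the block structure of the Sylvester matrix). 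Using that the solution space of $\tilde p$ is $l$-dimensional over $\KK$ with the $X_j$'s algebraically independent, I would show that $\Init_q(\epsilon_p(n))$, viewed as a function of $n$ and of $X_1,\ldots,X_l$, is not identically zero: if it were, then every holonomic sequence satisfying $p$ would satisfy a lower-order difference polynomial built from $\Init_q$ and the defining relations, contradicting either minimality of $p$ or the transcendence degree count underpinning \Cref{th:theo1}. Then $\Init_q(\epsilon_p(n))=0$ can hold for only finitely many $n$ (the zero set of a nonzero holonomic sequence whose characteristic data we control), which is exactly the regularity condition.

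I would organize the write-up as: (1) reduction to the regular case via $\tilde p=\sigma^{N+1}(p)$; (2) the Sylvester/determinant construction of $q$ and the order bound $l+d$; (3) verification that $q$ is rationalizing (linearity in the top shift); (4) identification of $\Init_q$ as the complementary minor and the order/degree drop; (5) the non-vanishing argument showing $(\epsilon_p(n))_{n>N}$ is a generic zero of the appropriate irreducible factor, hence D-algebraic. Step (2) also needs a genericity/non-degeneracy check that the $d+1$ shifts $p,\sigma(p),\ldots,\sigma^d(p)$ are "independent enough" that the determinant $q$ is not the zero polynomial — this is where the degree-$d$ hypothesis is used essentially, since with $d+1$ unknown powers of $n$ one needs exactly $d+1$ linear relations, and with fewer shifts the system would be underdetermined.

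\textbf{Main obstacle.} The hard part will be step (5), the genericity claim — i.e. proving that the symbolic-initial solution does not accidentally lie on $\Init_q=0$ and thereby fail to be a generic zero; this is precisely the point where the original argument in \cite{teguia2024rational} was flawed, since it is tempting but incorrect to assume the generic solution automatically avoids $\Init_q$. Controlling this requires genuinely using that $p$ is \emph{minimal} (so no lower-order holonomic relation holds) together with a careful transcendence-degree bookkeeping in $\KK(n)[X_1,\ldots,X_l][\sigma^\infty(x)]$, rather than a naive dimension count. A secondary subtlety is that $q$ need not be irreducible, so one must first extract the irreducible factor that $(\epsilon_p(n))_{n>N}$ actually generically solves before \Cref{prop:genzeroInit} and the regularity condition can be invoked cleanly.
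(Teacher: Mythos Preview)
Your construction (steps (1)–(4)) matches the paper's: the paper writes $\sigma^j(p)=\sum_{k=0}^d\gamma_{j,k}\,n^k$ for $j=0,\ldots,d$, uses the first $d$ of these as a linear system in $(n,n^2,\ldots,n^d)$ with $d\times d$ coefficient matrix $M_\gamma=(\gamma_{j,k})_{0\le j\le d-1,\,1\le k\le d}$, solves by Cramer, and substitutes into $\sigma^d(p)$. Your $(d{+}1)\times(d{+}1)$ determinant is the same object up to clearing $|M_\gamma|$, and the linearity-in-top-shift argument is identical.

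Where you diverge is step (5). The paper does \emph{not} argue by minimality of $p$ or by a transcendence-degree count. It gives a direct structural argument in three pieces: (a) $|M_\gamma|\neq 0$ as a polynomial in $\KK[s(n),\ldots,s(n{+}l{+}d{-}1)]$, because each entry $\gamma_{j,k}$ is a homogeneous linear form in $s(n{+}j),\ldots,s(n{+}j{+}l)$, the rows involve shifted (hence linearly independent) blocks of variables, and within a row the columns are linearly independent forms unless $p$ collapses to a $C$-finite equation of the same order, contradicting $d>0$; (b) after the Cramer substitution into $\sigma^d(p)$, the coefficient of $s(n{+}l{+}d)$ cannot vanish identically, because that coefficient carries genuine constants (coefficients of a polynomial in $n$) while the relations being used are homogeneous linear forms in the $s$-variables and cannot manufacture those constants; and (c) on the symbolic solution $(\epsilon_p(j))_{j>N}$, the solved Cramer expressions for $n^k$ evaluate to the actual powers $j^k$, so the initial of the resulting polynomial evaluates to the leading polynomial coefficient of $p$ at $j+d$, which is nonzero for all $j>N$ by the very choice of $N$. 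No irreducible-factor extraction is performed, and \Cref{prop:genzeroInit} is not invoked.

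Your proposed route for (5) has a genuine gap as stated. If $\Init_q(\epsilon_p(n))\equiv 0$, what you obtain is that $\epsilon_p$ satisfies a \emph{nonlinear, non-holonomic} difference polynomial of order $\le l{+}d{-}1$ in $\DD_s$. That does not contradict minimality of $p$, which is a statement about \emph{holonomic} relations of order $\le l$ in $\DD_s(n)$; converting this into a usable contradiction would require essentially the structural analysis of the $\gamma_{j,k}$'s that the paper carries out anyway. The key observation you are missing is (c): evaluating at $\epsilon_p$ collapses the Cramer expressions back to honest powers $j^k$, so that $\Init_q(\epsilon_p(j))$ becomes a known univariate polynomial in $j$ whose roots you already control via $N$. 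This replaces your proposed abstract non-vanishing argument with an explicit computation and also sidesteps the irreducibility issue you flagged.
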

\begin{proof} Let $p\in\DD_s(n)$ be holonomic of degree $d$ and order $l$. If $d=0$, then we are done. For the rest of the proof, we assume that $d>0$. This assumption implies that $(s(n))_n$ (as a sequence) does not vanish any holonomic difference polynomial of order $\leq l$ and degree $\leq d$. In particular, no constant coefficient linear recurrence of order $\leq l$ is satisfied by $(s(n))_n$.

We look at $p$ and its shifts as polynomials in $n$ such that
\begin{equation}\label{eq:shiftgamma}
    \sigma^j(p) = \sum_{k=0}^d \gamma_{j,k}\,n^k,\,\, j=0,\ldots,d,
\end{equation}
where $\gamma_{j,k}\in\KK[s(n+j),\ldots,s(n+j+l)]$. We consider the associated equations for $j<d$ and write them as follows
\begin{equation}\label{eq:nksys}
   \sum_{k=1}^d \gamma_{j,k}\,n^k = -\gamma_{j,0},\,\, j=0,\ldots,d-1.
\end{equation}
This is a linear system of $d$ equations in $(n,n^2,\ldots,n^d)^T$. We claim that the matrix of the system has a generic full rank due to the shifts involved in the $\gamma_{j,k}$'s. Indeed, each $\gamma_{j,k}$ has the following form:
\begin{equation}\label{eq:gammacsdef}
    \gamma_{j,k} \coloneqq \sum_{i=0}^{l} c_{j,i,k}\,s(n+j+i) = C_{j,k}\cdot S_{l+j},
\end{equation}
where $c_{j,i,k}$ is the constant coefficient of $n^k$ in the polynomial coefficient of $s(n+j+i)$ in $\sigma^j(p)$, $i=0,\ldots,l$, $j=0,\ldots,d-1$; $S_{l+j}=(s(n+j),\ldots,s(n+j+l))^T$ and $C_{j,k}=(c_{j,0,k},c_{j,1,k},\ldots,c_{j,l,k})$. Note that all $c_{j,i,k}$'s are linear combinations of the $c_{0,i,k}$'s, and $c_{j,i,d}=c_{0,i,d}$ for all $j=0,\ldots,d-1$.

Let $\Gamma_0,\ldots,\Gamma_{d-1}$ be the rows of the matrix from \eqref{eq:nksys}. Observe that 
\begin{equation}\label{eq:Gammacsdef}
    \Gamma_j = (\gamma_{j,1},\ldots,\gamma_{j,d}) = (C_{j,1}\cdot S_{l+j},\ldots, C_{j,d}\cdot S_{l+j}).
\end{equation}
The components of the $\Gamma_j$'s are all nonzero as they all yield $l$th-order linear recurrence equations with constant coefficients. 

Let $\lambda_0,\lambda_1,\ldots,\lambda_{d-1}\in \KK(s(n),\ldots,s(n+l+d-1))$ such that $\sum_{j=0}^{d-1} \lambda_j\,\Gamma_j=0$. The latter is a homogeneous linear system of linearly independent equations in the $\lambda$'s and can only have the trivial solution. 

To see that, notice that for $0\leq j_1\neq j_2\leq d-1$, $\gamma_{j_1,k}$ and $\gamma_{j_2,k}$ can be seen as two independent variables because $S_{l+j_1}$ and $S_{l+j_2}$ are two linearly independent vectors over $\KK(s(n),\ldots,s(n+l+d-1))$. 

Furthermore, one can always find indices $1\leq k_1\neq k_2\leq d$ such that the constant coefficients $c_{j,i,k_1}$ and $c_{j,i,k_2}$ (appearing in front of $s(n+j+i)$ for some $i$) are distinct. This distinction is vital; without it, the holonomic equation would simplify to a $C$-finite equation of the same order, contradicting our hypothesis. This specific characteristic implies that $\gamma_{j,k_1}$ and $\gamma_{j,k_2}$ are also effectively distinct variables, given that they generically represent two linearly independent homogeneous linear forms in $\KK[s(n+j),\ldots,s(n+l+j)]$.

Thus, the homogeneous system in the $\lambda$'s can be represented by the following matrix:
\begin{equation}\label{eq:detgamma}
    M_{\gamma}\coloneqq 
    \begin{bmatrix}
        \gamma_{0,1}   & \gamma_{0,2}   & \ldots & \gamma_{0,d}  \\
        \gamma_{1,1}   & \gamma_{1,2}   & \ldots & \gamma_{1,d}  \\
        \vdots         & \vdots         & \ldots & \vdots        \\
        \gamma_{d-1,1} & \gamma_{d-1,2} & \ldots & \gamma_{d-1,d}\\
    \end{bmatrix}.
\end{equation}
This matrix $M_{\gamma}$ can be regarded as a square matrix where its $d^2$ components are distinct variable components, such as $(x_1,x_2,\ldots,x_{d^2})$. Alternatively, and perhaps more naturally, $M_{\gamma}$ can be viewed as a square matrix whose components are homogeneous linear forms with generic coefficients. In this perspective, the components within the same column share the same underlying set of variables, while those of two consecutive columns exhibit an overlapping structure.

Therefore, the determinant of $M_{\gamma}$ is nonzero, implying that $\lambda_0=\lambda_1=\cdots=\lambda_{d-1}=0$. It is essential to note that this analysis relies on purely symbolic computation involving shifts of $s(n)$. The value of $|M_{\gamma}|$ for a particular numerical value of $n$ is not the primary concern; rather, the focus is on establishing that $|M_{\gamma}|$ is not the zero-polynomial in $\KK[s(n),\ldots,s(n+l+d-1)]$. 

Hence \eqref{eq:nksys} is non-singular, which implies that for all positive integers $k\leq d$, $n^k$ belongs to the field $\KK(s(n),\ldots,s(n+d+l-1))$. 

Now, let us consider $\sigma^d(p)$, viewed as a holonomic difference polynomial. We must explain why the $d$th shift of the leading polynomial coefficient of $p$ does not vanish when substituting the expressions of the $n^k$'s. Indeed, if it were to vanish, this would imply that the equations of the linear system solved to obtain the $n^k$ are linearly dependent with the polynomial coefficient of $s(n+d+l)$. This, however, is impossible because all coefficients of the linear system in \eqref{eq:nksys} are homogeneous linear forms in $\KK[s(n),\ldots,s(n+l+d-1)]$, each involving at least two `variables'. Such homogeneous linear forms cannot linearly combine to produce the constant coefficients that are present in the polynomial coefficient of $s(n+d+l)$ in $\sigma^d(p)$.

Furthermore, the symbolic zero $(\epsilon_p(j))_{j>N}$ cannot vanish the leading polynomial coefficient of $\sigma^d(p)$ since writing all its terms in terms of $X_1,X_2,\ldots,X_l$ and values of $n=j$, would simply evaluate that leading polynomial coefficient at $j$. And since $j>N,$ where $N$ is the maximum nonnegative integer roots of the polynomial coefficients of $p$, we can be sure that the leading polynomial coefficient is nonzero for all $j>N$.

Finally, since all the $n^k$'s are free of $s(n+l+d)$, the resulting difference polynomial is rationalizing with order at most $l+d$ in $\DD_s$, and almost every zero of $p$ is one of its D-algebraic zeros.
\end{proof}

Our implementation of the algorithm from the proof of \Cref{th:holoisratrec} is now part of the \texttt{DalgSeq} subpackage of \texttt{NLDE}. The corresponding procedure is \texttt{HoloToSimpleRatrec}.

\begin{example}[Generating Somos-like sequences \cite{malouf1992integer,ekhad2014generate}] A Somos-like sequence is an integral sequence defined with a rational recursion. Using \Cref{th:holoisratrec}, one can generate a Somos-like sequence as follows:
\begin{enumerate}
    \item Take a holonomic equation and choose integral initial values such that all the following terms are also integers. A natural choice is to take an equation of the form
    \begin{equation}\label{eq:holosomos}
        s(n+k+1) = P_0(n)s(n)+\cdots+P_k(n)s(n+k),
    \end{equation}
    $P_i(n)\in\KK[n],i=0,\ldots,k$, with any set of $k+1$ integers for the initial values.
    \item Then use the algorithm from the proof of \Cref{th:holoisratrec} to convert \eqref{eq:holosomos} into a rationalizing difference polynomial.
\end{enumerate}
    Concretely, let us take
    \begin{equation}\label{ex:somos}
        s(n+3)=n s(n)+(n+1) s(n+1)+(n+2) s(n+2).
    \end{equation}
    We choose the initial values $s(0)=s(1)=s(2)=1$. This is \href{https://oeis.org/A122752}{A122752} from \cite{sloane2003line}. We have $s(3)=3$. Thus, the sequence defined by \eqref{ex:somos} and these initial values is an integer sequence. Our procedure \texttt{HoloToSimpleRatrec} produces the following rational recursion from \eqref{ex:somos}:
    \begin{multline}\label{ex:ratrec}
        s(n+4)=\frac{1}{{s(n)+s(n+1)+s(n+2)}}\Big(s(n) s(n+1)+2 s(n) s(n+2)+3 s(n) s(n+3)\\
        +3 s(n+1) s(n+3)+2 s(n+2) s(n+3)+s(n+3)^{2}\Big).
    \end{multline} \QEDA
\end{example}

\begin{example}\label{ex:catholotorec} Catalan numbers are defined by the formula $C(n)\coloneqq\frac{1}{n+1}\binom{2n}{n}$, and satisfy the equation $(n+2)s(n+1)-(4 n+2) s(n)=0$, which is holonomic. Using \texttt{HoloToSimpleRatrec}, we convert that equation into the rational recursion
\begin{equation}
    s(n+2)=\frac{2 s(n+1) \left(8 s(n)+s(n+1)\right)}{10 s(n)-s(n+1)}.
\end{equation}
The above recursion appeared on the OEIS website for \href{https://oeis.org/A000108}{A000108} in 2006.  

A noteworthy observation emerges from the geometrical view of \Cref{th:holoisratrec}. For the present example, with $\KK=\RR$, let us consider the surface defined by 
\begin{equation}
    H_C\coloneqq \left\lbrace (x,y,z),\,\RR^3\,\colon z\,(x+2)-y\,(4 x+2)=0\right\rbrace.
\end{equation}
This encodes the holonomic equation of Catalan numbers. On this surface, Catalan numbers are the points $P_n \coloneqq (n, C(n), C(n+1)),n\in\NN$. For the D-algebraic representation, we consider 
\begin{equation}
    H_C'\coloneqq \left\lbrace (x,y,z),\,\RR^3\,\colon z\,(10 x - y) - (8 x + y)=0\right\rbrace,
\end{equation}
where Catalan numbers are the points $P_n'\coloneqq (C(n), C(n+1), C(n+2)), n\in\NN$. It turns out that $H_C$ and $H_C'$ intersect on a curve surrounded by the points $P_n$ and $P_n'$, with $P_1=P_0'$ on it. The projection of the  algebraic variety thus defined onto the $xy$-plane is given by the curve
\begin{equation}
    \mathcal{C}_C\coloneqq \left\lbrace (x,y),\,\RR^2\,\colon y\,(x + 1) - 2 x (2 x - 1)=0\right\rbrace.
\end{equation}
\Cref{fig:catalanspace} illustrates this geometry from the first octant of the $xyz$-space.
\begin{figure}[H]
        \centering
        \includegraphics[scale = 0.20]{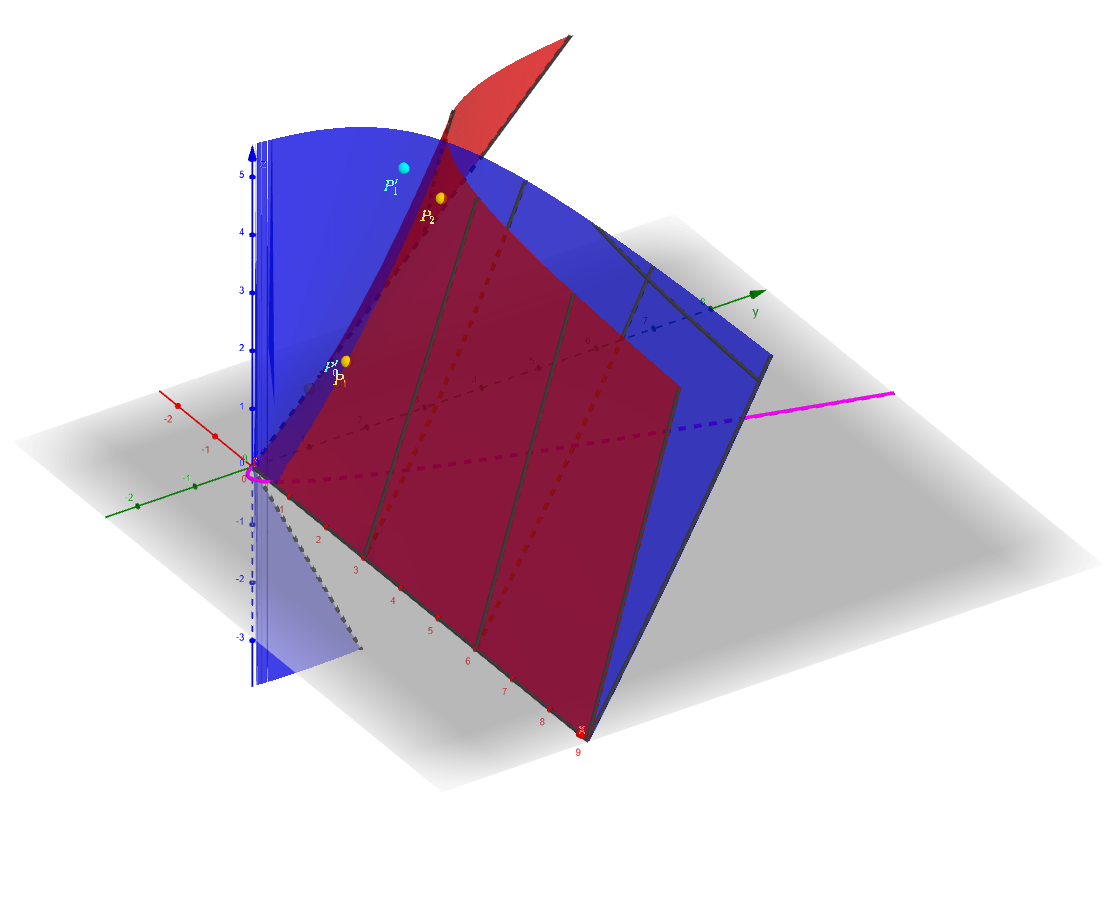}
        \captionsetup{justification=centering}
        \caption{A geometric view of \Cref{th:holoisratrec} for Catalan numbers.\\
        $H_C$ is in red, $H_C'$ in blue, and $\mathcal{C}_C$ is in magenta.}
        \label{fig:catalanspace}
    \end{figure}
Further study is needed to elucidate the connection between such varieties and their related holonomic sequences.\QEDA
\end{example}

\subsection{$C^2$-Finite sequences}

A $C$-finite sequence solves a linear recurrence equation with constant coefficients. A $C^2$-finite sequence is a solution to a linear recurrence equation with $C$-finite term coefficients \cite{jimenez2023extension}. As in the differential case with $D^2$-finite functions, $C^2$-finite sequences are D-algebraic. A ``slow'' Groebner bases-based implementation for computing ADEs from $C^2$-finite sequences is provided in our package under the name \texttt{CCfiniteToDalg}. The approach is similar to that of $D^2$-finite functions as described in \cite{teguia2023operations}. However, in this subsection, we establish a result similar to \Cref{th:holoisratrec} for this class of sequences. This result can also be seen as a consequence of the result from \cite{clemente2023rational}.

As in the previous subsection, the concept of ``almost every'' can also be defined. However, for this part, we only concentrate on the algebraic manipulations. Before stating the main result, let us illustrate the idea on a generic first-order $C^2$-finite sequence with second-order $C$-finite term coefficients.

Let $(s(n))_n$ be a first-order $C^2$-finite sequence with $C$-finite coefficients $(c_1(n))_n$ and $(c_0(n))_n$, such that
\begin{align}
    c_1(n)\,s(n+1)&+c_0(n)\,s(n)=0, \label{eq:ccde}\\
    \alpha_{1,1}\,c_1(n+1)+\alpha_{1,0}\,c_1(n)=c_1(n+2),&\quad\, \alpha_{0,1}\,c_0(n+1)+\alpha_{0,0}\,c_{0}(n)=c_0(n+2) \label{eq:cde}
\end{align}
where $\alpha_{i,j}\in\KK,\, i,j\in\{0,1\}.$ In what follows, we assume that denominators in rational expressions are nonzero for some integers $n$ within the domain of interest. 

Our idea is to eliminate the $c_i$'s from the shifts of \eqref{eq:ccde} by incremental substitution. We start by eliminating $c_1(n)$ and its shifts. From \eqref{eq:ccde} and its first shift, we have
\begin{equation}
 c_{1}\! \left(n\right)=-\frac{c_{0}\! \left(n\right) s\! \left(n\right)}{s\! \left(n+1\right)},\quad\,   c_{1}\! \left(n+1\right)=-\frac{c_{0}\! \left(n+1\right) s\! \left(n+1\right)}{s\! \left(n+2\right)}.\label{eq:elimc1}
\end{equation}
We now proceed with the elimination of $c_0(n)$. We consider the second shift of \eqref{eq:ccde} with the corresponding substitutions using \eqref{eq:cde} and \eqref{eq:elimc1}. We obtain
\begin{equation}
c_{0}\! \left(n\right)=-\frac{s\! \left(n+1\right) \left(s\! \left(n+2\right)^{2} \alpha_{0,1}-s\! \left(n+1\right) s\! \left(n+3\right) \alpha_{1,1}\right) c_{0}\! \left(n+1\right)}{s\! \left(n+2\right) \left(s\! \left(n+1\right) s\! \left(n+2\right) \alpha_{0,0}-s\! \left(n\right) s\! \left(n+3\right) \alpha_{1,0}\right)}.
\end{equation}
Given that all shifts of $C$-finite coefficients can now be written as rational multiples of $c_0(n+1)$, it follows that their substitution in the third shift of $\eqref{eq:ccde}$ yields the product of $c_0(n+1)$ and a rational expression in the shifts of $s(n)$, in which $s(n+4)$ appears linearly. Hence, after canceling $c_0(n+1)$ we deduce the desired recursion:
\[s\! \left(n+4\right)=\frac{\mathsf{N}}{\mathsf{D}}\coloneqq\frac{\mathsf{N}(s(n),\ldots,s(n+3))}{\mathsf{D}(s(n),\ldots,s(n+3))},\]
where
\begin{small}
\begin{dmath}
\mathsf{N}=-s\! \left(n+3\right) \left(-s\! \left(n+2\right) s\! \left(n\right) s\! \left(n+3\right) \alpha_{0,1}^{2} \alpha_{1,0}+s\! \left(n+1\right)^{2} s\! \left(n+3\right) \alpha_{0,0} \alpha_{0,1} \alpha_{1,1}+s\! \left(n+2\right)^{2} s\! \left(n+1\right) \alpha_{0,0}^{2}-s\! \left(n+2\right) s\! \left(n\right) s\! \left(n+3\right) \alpha_{0,0} \alpha_{1,0}\right),\label{eq:firstordnum}
\end{dmath} 
\begin{dmath}
\mathsf{D}=s\! \left(n+2\right)^{2} s\! \left(n\right) \alpha_{0,1} \alpha_{1,0} \alpha_{1,1}-s\! \left(n+2\right) s\! \left(n+1\right)^{2} \alpha_{0,0} \alpha_{1,1}^{2}-s\! \left(n+2\right) s\! \left(n+1\right)^{2} \alpha_{0,0} \alpha_{1,0}+s\! \left(n+1\right) s\! \left(n\right) s\! \left(n+3\right) \alpha_{1,0}^{2}.\label{eq:firstorddenom}
\end{dmath}
\end{small}

This suffices to prove our following proposition.

\begin{proposition} Almost every first-order $C^2$-finite sequence with second-order $C$-finite term coefficients is a D-algebraic zero of a rationalizing difference polynomial of order at most $5$.
\end{proposition}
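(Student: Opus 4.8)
The plan is to make rigorous the elimination of the $C$-finite coefficients carried out just above the statement, and to check that it always terminates with a rationalizing relation of order at most $5$ (in fact of order $4$). The key structural observation is that \eqref{eq:ccde} together with its first three shifts (applications of $\sigma$) form a system of four equations that are \emph{homogeneous and linear} in the four quantities $c_1(n)$, $c_1(n+1)$, $c_0(n)$, $c_0(n+1)$: by \eqref{eq:cde} every further shift $c_i(n+2)$, $c_i(n+3)$ is a $\KK$-linear combination of $c_i(n)$ and $c_i(n+1)$, so the system closes on these four unknowns, with coefficients that are polynomials in $s(n),\dots,s(n+4)$ and the $\alpha_{i,j}$. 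This is precisely the incremental substitution displayed above: $c_1(n)$ and $c_1(n+1)$ are solved from \eqref{eq:ccde} and its first shift, $c_0(n)$ is then expressed as a rational multiple of $c_0(n+1)$ from the second shift, and substitution into the third shift collapses it to $c_0(n+1)$ times a rational expression in $s(n),\dots,s(n+4)$.

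Since neither $(c_0(n))_n$ nor $(c_1(n))_n$ is the zero sequence, the vector $\bigl(c_1(n),c_1(n+1),c_0(n),c_0(n+1)\bigr)$ is a nontrivial solution of this $4\times 4$ homogeneous system for every $n$, so the coefficient determinant vanishes when evaluated at the terms of $(s(n))_n$; thus that determinant, an element of $\KK[\alpha_{i,j}][s(n),\dots,s(n+4)]$, is a difference polynomial having $(s(n))_n$ as a zero. Expanding it along the row coming from the third shift of \eqref{eq:ccde} — the only row in which $s(n+4)$ occurs, entering linearly through the $C$-finite expansion of $c_1(n+3)$ — shows that the determinant is linear in $s(n+4)$. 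After clearing denominators it is exactly the rationalizing difference polynomial $\mathsf{D}\,\sigma^4(x)-\mathsf{N}\in\DD_s$ of order $4\le 5$, with $\mathsf{N}$ and $\mathsf{D}$ the explicit polynomials \eqref{eq:firstordnum} and \eqref{eq:firstorddenom}; being free of the $c_i$'s, it vanishes at $(s(n))_n$.

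The delicate point, and the one I expect to be the main obstacle, is the qualifier ``almost every''. One must ensure that the construction does not degenerate, namely that $\mathsf{D}$ is not the zero polynomial and that, for the sequence under consideration, $\mathsf{D}(s(n),\dots,s(n+3))\ne 0$ for all sufficiently large $n$, so that the regularity condition of \Cref{def:dalgdefirr} holds and $(s(n))_n$ is genuinely a D-algebraic zero. That $\mathsf{D}\not\equiv 0$ is visible from the explicit formula \eqref{eq:firstorddenom}, and in general follows from a rank argument on the $4\times 4$ coefficient matrix entirely parallel to the one used in the proof of \Cref{th:holoisratrec}: for generic $\alpha_{i,j}$ and generic shifts of $s(n)$ the matrix is nonsingular. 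The non-vanishing of $\mathsf{D}$ along the specific sequence is where ``almost every'' genuinely intervenes — it excludes $C^2$-finite sequences (for instance those with infinitely many repeated values, or satisfying a lower-order relation) for which $\mathsf{D}$ vanishes infinitely often. Modulo replacing $\mathsf{D}\,\sigma^4(x)-\mathsf{N}$ by an irreducible factor still admitting $(s(n))_n$ as a D-algebraic zero, this proves the proposition.
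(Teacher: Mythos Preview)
Your argument is correct and is essentially the same elimination as the paper carries out immediately before the proposition; you have simply repackaged the step-by-step substitution as the vanishing of a $4\times4$ determinant, and your treatment of the ``almost every'' qualifier via the regularity condition is somewhat more explicit than the paper's one-line appeal to the preceding computation. A small remark: you obtain order $4$, while the stated bound is $5$; the paper explains after the proposition that the extra unit is to accommodate the situation where one of the $C$-finite coefficients is in fact a nonzero constant, a case in which the incremental substitution of \Cref{algo:Algo2} requires one further shift --- your determinant formulation is insensitive to this and still yields an order-$4$ relation in that situation, so your parenthetical ``in fact of order $4$'' is justified.
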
 

Note that if some of the coefficients in the $C^2$-finite equation are nonzero constants, then in the third shift of that equation, not all $C$-finite coefficients (including constants) would be expressible as a rational function in the shifts of $s(n)$ multiplied by some $c_i(n+1),\, i\in\{0,1\}$. Therefore, one more shift would be required to find the rational recursion. This explains why the bound is $5$ and not $4$.

\begin{example}\label{ex:firstordccfinite} We take $(c_1(n))_n$ as a solution of the Fibonacci recurrence equation, and $(c_0(n))_n$ defined with the recursion $c_0(n+2)=2\,c_0(n+1)+3\,c_0(n)$. We have $\alpha_{1,1}=\alpha_{1,0}=1,$ $\alpha_{0,1}=2,$ and $\alpha_{0,0}=3$. Using \eqref{eq:firstordnum} and \eqref{eq:firstorddenom}, we deduce the rational recursion
\begin{equation}
s\! \left(n+4\right)=-\frac{s\! \left(n+3\right) \left(6 s\! \left(n+3\right) s\! \left(n+1\right)^{2}-7 s\! \left(n+3\right) s\! \left(n\right) s\! \left(n+2\right)+9 s\! \left(n+2\right)^{2} s\! \left(n+1\right)\right)}{s\! \left(n+3\right) s\! \left(n+1\right) s\! \left(n\right)+2 s\! \left(n\right) s\! \left(n+2\right)^{2}-6 s\! \left(n+1\right)^{2} s\! \left(n+2\right)}.
\end{equation}
The Gr\"obner bases method implemented in \texttt{CCfiniteToDalg} returns this same equation in about $85$ seconds in CPU time.\QEDA
\end{example}

Let us now present the general algorithm. To simplify notations, we assume that the input data can be updated within the algorithm.

\begin{algorithm}[ht]\caption{$C^2$-finite to D-algebraic rational recursion}\label{algo:Algo2}
    \begin{algorithmic} 
    \\ \Require \item 
          \begin{itemize}
              \item $C^2$-finite equation $(p):c_0(n)s(n)+\cdots+c_l(n)s(n+l)=0,\,l>0$.
              \item $C$-finite equations $(q_j):c_j(n+r_j)=\alpha_{i,0}c_j(n)+\cdots+\alpha_{j,r_j-1}c(n+r_j-1),$ $j\in\{0,\ldots,l\}.$
          \end{itemize}
    \Ensure A rationalizing difference polynomial of order at most $l+\sum_{j=0}^l r_j$ in $\DD_s$.
    \begin{enumerate}
    \item Set $r_l\coloneqq r_l-1$, unless some $r_j=0$ and $c_j(n)\neq 0$.
    \item For each $j$, $0\leq j \leq l$ do:
        \begin{itemize}
            \item[2.1.] For each $k$, $0\leq k \leq r_j-1,$ do:
            \begin{itemize}
                \item[2.1.1.] $c_j(n+k)\coloneqq\text{solve}((p),c_j(n+k)).$ /\!/root of a univariate linear polynomial.
                \item[2.1.2.] $(p)\coloneqq \sigma((p))$;
                \item[2.1.3.] Update $(p)$ and the $(q_i),\, 0\leq i \leq l,$ by substituting $c_j(n+k)$ as solved.
            \end{itemize}
        \end{itemize}
    \item Let $r\coloneqq l + \sum_{j=0}^l r_j$.
    \item Return ``$s(n+r)=\text{solve}((p),s(n+r))$'' or its associated difference polynomial.


    \end{enumerate}	
    \end{algorithmic}
\end{algorithm}

\begin{theorem}\label{th:ccflhs} Almost every $C^2$-finite sequence of order $l$ with $C$-finite term coefficients of order $r_j,j=~0,\ldots,l$, is a D-algebraic zero of a rationalizing difference polynomial of order at most $l+\sum_{j=0}^lr_j.$
\end{theorem}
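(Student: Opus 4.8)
The plan is to generalize the elimination argument already carried out for the first-order case to arbitrary order $l$ and arbitrary coefficient orders $r_j$, following exactly the structure of \Cref{algo:Algo2}. First I would set up notation: the $C^2$-finite equation $(p)\colon \sum_{j=0}^l c_j(n)\,s(n+j)=0$ together with the $C$-finite recurrences $(q_j)$ for each coefficient sequence. The key observation is that each $(q_j)$ lets us express $c_j(n+k)$ for every $k\ge r_j$ as a $\KK$-linear combination of $c_j(n),\ldots,c_j(n+r_j-1)$; so only finitely many ``fundamental'' shifts of each coefficient are genuinely unknown. The total number of such unknowns is $\sum_{j=0}^l r_j$ (with the $-1$ adjustment of Step~1 when the top coefficient $c_l$ has constant $C$-finite recurrence, exactly as in the proposition preceding the theorem), and each shift of $(p)$ gives one new linear equation in these unknowns. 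The strategy is: take enough shifts of $(p)$ — namely $\sum_j r_j$ of them — and successively solve for and eliminate the coefficient unknowns one block at a time (the loop in Step~2 of \Cref{algo:Algo2}), at each stage substituting the solved rational expression back into the remaining shifted equations and into the $(q_j)$'s.

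The crux is to track what happens to the ``last'' equation. After eliminating all $\sum_j r_j$ coefficient unknowns, we have used up shifts $\sigma^0,\ldots,\sigma^{\sum_j r_j - 1}$ of $(p)$, and are left with one more relation — the shift $\sigma^{\sum_j r_j}$ of $(p)$ after all substitutions — which now involves only the sequence $s$ and (because of the incremental substitution structure) a single leftover coefficient factor, say $c_{l}(n+r_l-1)$ or $c_0(n+1)$ in spirit of the first-order computation. I would argue that this leftover factor appears as an overall multiplicative factor that cancels, leaving a relation purely among $s(n),\ldots,s(n+r)$ with $r=l+\sum_{j=0}^l r_j$. The reason the highest shift $s(n+r)$ enters \emph{linearly} is the same as in the first-order case and in \Cref{th:holoisratrec}: the only place $s(n+r)$ can appear is through the top term $c_l(n+\cdot)\,s(n+\cdot)$ of the final shifted equation, and since by that point $c_l$'s relevant shift has been replaced by a rational function in strictly lower shifts of $s$ (it was eliminated earlier, or it never carried $s(n+r)$), the dependence on $s(n+r)$ is degree one. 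Solving for $s(n+r)$ then yields the rationalizing difference polynomial, and clearing denominators gives an element of $\DD_s$ of order at most $r$.

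The remaining genericity bookkeeping mirrors \Cref{th:holoisratrec} and \Cref{def:almostall}: I would check that the symbolic solution $(\epsilon_p(n))$ built from generic initial data does not vanish the initial $\Init_{q}$ of the constructed rationalizing polynomial, i.e.\ the denominator $\mathsf{D}$ is not identically zero when evaluated along this symbolic sequence, nor along almost all actual $C^2$-finite zeros of $(p)$. Here ``almost every'' again means we exclude degenerate $C$-finite coefficient sequences (e.g.\ those forcing the $C^2$-finite equation to collapse to one of strictly smaller order, or repeated-value phenomena requiring polymorphic D-algebraicity, as flagged at the start of \Cref{subsec:holorec}). The nonvanishing of $\mathsf{D}$ as a polynomial in the $s(n+i)$'s is exactly the consistency statement that the successive linear solves in Step~2.1.1 never encounter an identically-zero pivot; this can be seen inductively because each pivot is a nonzero linear combination of shifts of $s$ with generic coefficients coming from the $\alpha_{j,k}$'s, analogous to the full-rank argument for $M_\gamma$ in \Cref{th:holoisratrec}.

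The main obstacle I anticipate is precisely this last point: proving that the pivots stay nonzero and that the leftover coefficient factor genuinely cancels \emph{in general}, rather than just in the worked $l=1$, $r_j=2$ example. In the holonomic proof this required the somewhat delicate linear-algebra argument about the determinant $|M_\gamma|$ of a matrix of homogeneous linear forms with overlapping variable supports; here the analogous matrix has a more intricate block structure (blocks indexed by $j$, each of size $r_j$, coupled through the shifts of $(p)$), so the honest proof of nonsingularity is the part that needs real care. I expect one can still push it through by exhibiting, for generic $\alpha_{j,k}$, a monomial in the $s(n+i)$'s that survives in the determinant/pivot expansion — i.e.\ a triangularity or dominance argument on supports — but making that fully rigorous for all $l$ and all $r_j$ is where the work lies. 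Everything else (the order bound $l+\sum_j r_j$, the linearity in $s(n+r)$, termination and correctness of \Cref{algo:Algo2}) is a routine induction once this genericity lemma is in hand.
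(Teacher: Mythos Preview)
Your proposal follows essentially the same approach as the paper: both argue by tracing the steps of \Cref{algo:Algo2}, counting the number of shifts applied to $(p)$, and observing that after all the $c_j(n+k)$'s have been successively eliminated the final shifted equation has $s(n+r)$ appearing only through the top term $c_l(\cdot)\,s(n+r)$, hence linearly. Your explanation of why the leftover coefficient factor (the analogue of $c_0(n+1)$ in the first-order example) cancels, and your accounting for the order bound $r=l+\sum_j r_j$ with the Step~1 decrement, match the paper's reasoning.

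Where you go further than the paper is in worrying about the nonvanishing of the successive pivots and sketching a determinant/support argument in the style of the $M_\gamma$ analysis from \Cref{th:holoisratrec}. The paper does \emph{not} carry out any such argument here: its proof is four bullet points asserting that each elimination step succeeds and that the final equation factors as claimed, with the genericity issues absorbed entirely into the ``almost every'' qualifier without further justification. So the obstacle you identify as ``where the work lies'' is real, but the paper simply does not do that work --- it treats the correctness of \Cref{algo:Algo2} as essentially self-evident once the first-order case has been worked out in detail. Your proposal is therefore correct and, if anything, more scrupulous than the published proof; the extra $M_\gamma$-style rigor you anticipate needing is not present in the paper.
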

\begin{proof}
    This is a consequence of the correctness of \Cref{algo:Algo2}, which might be detailed as follows.
    \begin{itemize}
        \item Each $c_j(n+k),\,0\leq j\leq l,$ and $0\leq k<r_j$, is eliminated in step 2.1.1. of \Cref{algo:Algo2}.
        \item This elimination enables the replacement of all appearances of $c_j(n+k)$ from the $(q_j)$, $(p)$, and all $c_i(n+k_i)$ that are already eliminated.
        \item Therefore, when the loop in step $2$ ends, each $c_j(n+k)$'s on the right hand side of each $C$-finite equation $(q_j)$ is a rational expression of shifts of $s(n)$, unless all $r_j\geq 1$, in which case $c_l(n+r_l-1)$ does not need to be expressed as a rational expression of shifts of $s(n)$. 
        \item The number of shifts applied to $(p)$ is exactly $r=l+\sum_{j=0}^lr_j$, with $r_l$ potentially updated in step $1$. In this shift of $(p)$, one of the factors is a rationalizing difference polynomial in which $s(n+r)$ appears linearly.
    \end{itemize}
\end{proof}

\begin{remark}\item 
\begin{itemize}
    \item As explained in the first-order case, in \Cref{algo:Algo2}, when some $C$-finite term coefficients are nonzero constants, i.e, $r_j=0$ and $c_j(n)\neq 0$, all $c_j(n+k),\, 0\leq k< r_j$ have to be expressed as rational expressions of shifts of $s(n)$, and in this case the order of the output is exactly $l+\sum_{j=0}^lr_j$, where $r_l$ is not updated in step 1. 
    \item The arrangement of the $C$-finite term coefficients in the input is irrelevant for the algorithm: Interchanging the indices $i$ and $j>i$ to eliminate $c_j$ before $c_i$ in the list of $C$-finite coefficients does not affect the correctness of the algorithm.
    \item \Cref{th:ccflhs} is also another proof of \Cref{th:holoisratrec} since every holonomic sequence  is $C^2$-finite as every polynomial satisfies a $C$-finite equation.
    \item The elimination procedure in Algorithm 2 is inherently similar to the linear algebra computations detailed in \cite[Section 4]{jimenez2020some}. Specifically, an analysis of the resultant approach (Theorem 28 in that paper) reveals that this method invariably yields a rationalizing difference polynomial when applied to $C^2$-finite equations. This finding was later explained by Jimenez-Pastor (private communication), after we had presented \Cref{th:ccflhs} to him.
\end{itemize}
\end{remark}

We implemented \Cref{algo:Algo2} as \texttt{CCfiniteToSimpleRatrec}. The latter is much more efficient than \texttt{CCfiniteToDalg}. The crucial advantage of \Cref{algo:Algo2} is that it does not use Gr\"{o}bner bases. We also observed that when \texttt{CCfiniteToDalg} completes its computations in a reasonable time, its output corresponds with that of \texttt{CCfiniteToSimpleRatrec}.

The approach used in \texttt{CCfiniteToDalg} requires that $c_l(n)\neq 0$ for sufficiently large $n$. This condition is satisfied when $(c_l(n))_n$ is \textit{non-degenerate} \cite{berstel1976deux}. This means that subsequences of $(c_l(n))_n$ indexed by arithmetic progressions are not $C$-finite of orders lower than $r_l$ \cite{berstel1976deux,methfessel2000zeros}. In the general case, however, the proper definition of a $C^2$-finite sequence is subject to answering the \textit{Skolem problem} \cite{ouaknine2012decision} for the leading $C$-finite term coefficient. Note that this constraint highly depends on the choice of initial values: the sequence $((-1)^n-1)_n$ satisfies the recurrence $s(n+2)-s(n)=0$ and has infinitely many zeros; however, the sequence $(2\,(-1)^n-1)_n$ satisfies the same recurrence equation but has no zero. In the following example, we use \texttt{CCfiniteToSimpleRatrec} to compute a rational recursion satisfied by a $C^2$-finite sequence for which two not-necessarily-equal solutions of $s(n+2)=s(n)$ appear within the coefficients of its equation.

\begin{example} Consider a $C^2$-finite sequence $(s(n))_n$ solution of 
\begin{equation}
    u(n)\,s(n) + 2\,s(n + 1) + v(n)\,s(n + 2) = 0,
\end{equation}
where $(u(n))_n$ and $(v(n))_n$ are two not-necessarily identical solutions of $s(n+2)-s(n)=0$. This includes the result of \cite[Example 4.1]{jimenez2023extension}. Executing the following code yields the result in $0.031$ second in CPU time.
\begin{lstlisting}
> with(NLDE:-DalgSeq); #loading the subpackage DalgSeq from NLDE
> CCfiniteToSimpleRatrec(u(n)*s(n) + 2*s(n + 1) + v(n)*s(n + 2) = 0, s(n), 
[u(n + 2) - u(n) = 0, v(n + 2) - v(n) = 0], [u(n), v(n)])
\end{lstlisting}
\begin{small}
\begin{equation*}
s(n+6)=\frac{ s(n+5) s(n+4) s(n)-s(n+5) s(n+2)^{2}-s(n+4)^{2} s(n+1)+s(n+4) s(n+3) s(n+2)}{s(n+3) s(n)-s(n+1) s(n+2)}.
\end{equation*}
\end{small}
Our \texttt{CCfiniteToDalg} command gives the same result (in the difference polynomial form) in $89.438$ seconds in CPU time. We provide the syntax of the code without output below to highlight the differences in syntax between the two procedures.
\begin{lstlisting}
> CCfiniteToDalg(u*s(n) + 2*s(n + 1) + v*s(n + 2) = 0, s(n), 
[u(n + 2) - u(n) = 0, v(n + 2) - v(n) = 0], [u(n), v(n)]):
\end{lstlisting}
Here, the $C$-finite term coefficients do not appear in the $C^2$-finite equation with the index variable. A full documentation of these commands is provided at \url{https://t3gu1a.github.io/NLDEdoc/DalgSeq-Commands-and-Examples.html}.
 \QEDA
\end{example}

\section{Subsequences}

Recall that a subsequence of a sequence $(s(n))_n$ is a sequence $(t(n))_n$ such that $t(n)=s(\varphi(n))$, where $\varphi$ is a strictly increasing embedding of $\NN$. Thus, to find a difference polynomial $p$ that vanishes at $(t(n))_n$, we need an endomorphism $\tilde{\sigma}$ that sends $s(\varphi(n))$ to $s(\varphi(n+1))$, i.e., the minimal gap between the orders of two difference monomials in $p$ is $\delta_{\tilde{\sigma}} = \min_{n\in\NN} \{\varphi(n+1)-\varphi(n)\}$ and not the usual $n+1-n=1$. This section focuses on the case where $\varphi(n)=dn$. This is the $n$th term formula of an arithmetic progression of common difference $d$ with initial term $0$. What makes this case rather natural is that all order gaps are multiples of the common difference $d$, and here we have $\delta_{\tilde{\sigma}}=d$. This implies that, assuming $x$ occurs in $p$, we may write $p$ as follows:
\begin{equation}
    p(x,\sigma^d(x),\sigma^{2d}(x),\sigma^{3d}(x),\ldots,\sigma^{dk}(x),\ldots).
\end{equation}
The desired endomorphism is thus defined by $\tilde{\sigma}=\sigma^d$. The core idea of our next theorem is to exploit the flexibility in choosing the endomorphism during the construction of the dynamical system $(\mathcal{M}_f)$ from \eqref{eq:radratdyn}. Since the sequence is assumed to be D-algebraic, no further assumptions are required to avoid zero divisors. This is because the defining difference polynomial inherently accounts for the specific indices of the subsequences, thereby ensuring the relevant denominators in the systems are nonzero. 
 
\begin{theorem}\label{Th:subseq} Let $d$ be a positive integer. If $(s(n))_{n\in\NN}$ is D-algebraic then so is $(s(dn))_{n\in\NN}$.
\end{theorem}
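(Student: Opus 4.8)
The plan is to reuse \Cref{algo:Algo1} and \Cref{th:theo1}, but built with the endomorphism $\sigma^d$ in place of $\sigma$: the output will be an algebraic difference equation relating $s(n), s(n+d), s(n+2d),\dots$, and the substitution $n\mapsto dm$ turns it into an ADE for $(s(dm))_{m}$ with the ordinary shift. Concretely, let $p\in\Dsigma$ be the irreducible difference polynomial defining $(s(n))_{n\in\NN}$, of order $r$; after a harmless shift we may assume $x$ occurs in $p$, and by \Cref{def:dalgdefirr} there is $n_0$ with $\Initp(s(n))\neq 0$ for all $n\geq n_0$. Writing $p=\Initp\,(\sigma^r(x))^m+p_1$ as in \eqref{eq:indegp1}, the relation $p(s(n),\dots,s(n+r))=0$ together with $\Initp(s(n))\neq 0$ exhibits $s(n+r)$ as a root of a genuine degree-$m$ polynomial over $\KK\big(s(n),\dots,s(n+r-1)\big)$ whose denominator is a power of a shift of $\Initp$.

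First I would set $M\coloneqq\max(r,d)$ and build a triangular system in the shape of $(\mathcal{M}_f)$, but with $\tilde\sigma\coloneqq\sigma^d$: state variables $w_1,\dots,w_M$ carrying $w_j=(s(n+j-1))_n$, and output $z=w_1$. The updates are $\tilde\sigma(w_j)=w_{j+d}$ whenever $j+d\leq M$, and for the remaining top $d$ indices $\tilde\sigma(w_j)=(s(n+d+j-1))_n$ is obtained by iterating $p$ a total of $d$ times, so that each such $\tilde\sigma(w_j)$ satisfies a polynomial relation over $\KK(\mathbf{w})$ whose other entries lie among the $w_i$ and the already-introduced lower-ordered $\tilde\sigma(w_k)$, with denominator a product of shifts of $\Initp$. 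One checks that the new terms needed along the way are exactly these $\tilde\sigma(w_j)$, so no further variables are introduced, and with respect to the orderings of \Cref{th:theo1} (read for $\tilde\sigma$) the generating set has pairwise distinct leading variables $\{z,\tilde\sigma(w_1),\dots,\tilde\sigma(w_M)\}$ and transcendence basis $\{w_1,\dots,w_M\}$. It is therefore a consistent triangular set, and the proof of \Cref{th:theo1} goes through verbatim with $\tilde\sigma$ in place of $\sigma$ (it used only that the operator is an endomorphism and this leading-variable structure), yielding a nonzero $q\in\langle\tilde\sigma^{\leq M}(I_{\mathcal M_f})\rangle\cap\KK[\tilde\sigma^{\infty}(z)]$ of order at most $M$.

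Since the actual sequences $w_j=(s(n+j-1))_n$ and $z=(s(n))_n$ satisfy every generating relation of $I_{\mathcal M_f}$ for all $n$, they satisfy $q$, i.e.\ $q\big(s(n),s(n+d),\dots,s(n+Md)\big)=0$ for all $n\in\NN$. Setting $n=dm$ gives $q\big(s(dm),s(dm+d),\dots,s(dm+Md)\big)=0$ for all $m$, so $(s(dn))_n$ is a zero of $q$ read as an ordinary difference polynomial. It remains to upgrade ``zero of $q$'' to ``D-algebraic'' in the sense of \Cref{def:dalgdef}: the only denominators entering the construction are shifts of $\Initp$ evaluated at consecutive terms of $s$, and since $(s(n))_n$ is assumed D-algebraic these are nonzero at every index $\geq n_0$, hence the relevant initials do not vanish at $(s(dn))_n$ once $dn\geq n_0$. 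Decomposing $q$ into irreducible factors and arguing as in the proof of \Cref{cor:cor1} (conditions 1--3 of \Cref{def:dalgdef}), one concludes that $(s(dn))_{n\in\NN}$ is D-algebraic; a transcendence-degree count (any $r+1$ of the values $s(m)$ are algebraically dependent over $\KK$) further shows the order is generically at most $r$.

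The main obstacle I expect is precisely this final upgrade: elimination may introduce extraneous factors, and one must exhibit a factor of $q$ having $(s(dn))_n$ as a genuine generic zero whose initial vanishes only finitely often — which is where the hypothesis that $(s(n))_n$ is \emph{D-algebraic} (not merely a zero of a difference polynomial) is essential, together with the observation preceding the theorem that the defining polynomial of $(s(n))_n$ already accounts for the sampled indices $dn$, so that no further non-vanishing assumptions are needed. A secondary, bookkeeping difficulty is verifying that the $d$-fold iteration of $p$ in the construction really yields a triangular system of dimension $\max(r,d)$, particularly when $d>r$ and the state must be wider than $\ord(p)$.
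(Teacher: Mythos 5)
Your proposal is correct and follows essentially the same route as the paper: replace $\sigma$ by $\tilde\sigma=\sigma^d$, encode a window of consecutive terms of $(s(n))_n$ as state variables of a radical-rational dynamical system whose top updates come from shifted copies of the defining polynomial $p$, and invoke \Cref{th:theo1}/\Cref{algo:Algo1} (read for $\tilde\sigma$) to obtain a nonzero eliminated difference polynomial in $z$ that, evaluated along $n=dm$, defines $(s(dn))_{n\in\NN}$, with the D-algebraicity of $(s(n))_n$ guaranteeing the non-vanishing of the relevant initials/denominators, exactly as in the paper's discussion preceding the theorem. The only difference is bookkeeping: your window of $\max(r,d)$ consecutive terms gives a system of dimension $\max(r,d)$, whereas the paper's indexing of the state by residues modulo $d$ yields dimension exactly $r$ and hence the sharper order bound of \Cref{cor:bound}, which is immaterial for the theorem as stated.
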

\begin{proof} For cleaner formulation, we will use the following iterative notation:

For $N$ variables $x_1,\ldots,x_N$ and a function $f$ in $N$ variables, we write
\begin{align*}
    &x_1,x_2,\ldots,x_N = \underline{x_i}_i,\\
    &f(f(x_1),f(x_2),\ldots,f(x_N)) = f\left(\underline{f(x_i)}_i\right),\\
    &f(f(x_2),f(x_3),\ldots,f(x_N)) = f\left(\underline{f(x_{i\neq 1})}_i\right) = f\left(\underline{f(x_{i>1})}_i\right).
\end{align*}

Let $\tilde{\sigma}=\sigma^d$ and $p\in\mathcal{D}_{\sigma}(x,x)$ of order $r$ such that 
    \begin{equation*}
        p(s(n))=0,\, \text{ for all } n\in\NN.
    \end{equation*}
Define
\begin{equation}
    y_1 = x,\, y_2 = \sigma(x),\ldots, y_{d} = \sigma^{d-1}(x).
\end{equation}
Let $r_1,r_0$ be the quotient and remainder of the Euclidean division of $r+1$ by $d$: $r+1=d\,r_1+r_0$. We can rewrite
\begin{equation*}
    p(x,\sigma(x),\ldots,\sigma^r(x)),
\end{equation*}
as
\begin{equation}
    p\Big(y_1,y_2,\ldots,y_{d},\tilde{\sigma}(y_1),\tilde{\sigma}(y_2),
    \ldots,\tilde{\sigma}^{r_1}(y_1),\tilde{\sigma}^{r_1}(y_2),\ldots,\tilde{\sigma}^{r_1}(y_{r_0})\Big).
\end{equation}
The latter is a multivariate difference polynomial in 
\[\mathcal{D}_{\tilde{\sigma}}(\KK,\{y_1,y_2,\ldots,y_{d}\}).\]
Our aim is to eliminate the difference indeterminates $y_j$, $j>1$. To do so, we build a radical-rational dynamical system and apply \Cref{algo:Algo1}. This will yield a univariate difference polynomial in $y_1$ that has $(s(dn))_{n\in\NN}$ as a zero. Let $m$ be the degree of $\sigma^r(x)$ in $p$ and $R_p$ be the rational expression obtained by solving $p=0$ for $(\sigma^r(x))^m$. Let $y_{i,j}, i=1,\ldots,d$, $j=0,\ldots,r_1$ be new indeterminates such that 
\begin{align}\label{eq:syssubsec}
&y_{i,0}=y_i, i=1,\ldots,d,\\
&\tilde{\sigma}(y_{i,j})=y_{i,j+1},\, i=1,\ldots,d,\, j=0,\ldots,r_1-2, \label{eq:syssub1}\\
&\tilde{\sigma}(y_{i,r_1-1})=y_{i,r_1},\, i=1,\ldots,r_0-1,\label{eq:syssub2}\\
&\tilde{\sigma}(y_{r_0,r_1-1})^m=R_p\left(\underline{y_{i,j}}_{i,j},\tilde{\sigma}(y_{r_0,r_1-1})\right), \label{eq:syssub3}\\
\begin{split}\label{eq:syssub4}
&\tilde{\sigma}(y_{r_0+k,r_1-1})^m=R_p\left(\underline{y_{i\geq k,0}}_i,\underline{y_{i,j\neq 0}}_{i,j},\underline{\tilde{\sigma}(y_{r_0\leq i \leq r_0+k,r_1-1})}_i\right),\\
&k=1,\ldots,d-r_0 
\end{split}
\end{align}
and 
\begin{equation}\label{eq:syssubfin}
\begin{split}
    &\tilde{\sigma}(y_{k,r_1})^m=R_p\Big(\underline{y_{i\geq d-r_0+k,0}}_i,\underline{y_{i,j\neq 0}}_{i,j},
    \underline{\tilde{\sigma}(y_{r_0\leq i \leq r_0+k,r_1})}_i,\underline{\tilde{\sigma}(y_{i\leq k,r_1})}_i\Big),\\
    & k=1,\ldots,r_0-1.
\end{split}
\end{equation}

The equations in \eqref{eq:syssub1}--\eqref{eq:syssubfin} define the desired dynamical system together with the output equation $z=y_{1,0}$.
\end{proof}

\begin{proposition}\label{cor:bound} Let $d$ be a positive integer. If $(s(n))_{n\in\NN}$ is D-algebraic of order $r$ in $\mathcal{D}_{\sigma}(\KK,x)$, then $(s(dn))_{n\in\NN}$ is D-algebraic of order at most $r$ in $\mathcal{D}_{\sigma^d}(\KK,x)$. In other words, $(s(dn))_{n\in\NN}$ is D-algebraic of order at most $dr$ in $\mathcal{D}_{\sigma}(\KK,x)$.
\end{proposition}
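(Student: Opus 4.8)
The plan is not to construct anything new, but to read off the order bound from the construction already carried out in the proof of \Cref{Th:subseq}, which reduces the whole matter to a dimension count. Recall that that proof fixes the endomorphism $\tilde{\sigma}=\sigma^d$, rewrites the order-$r$ difference polynomial $p$ satisfied by $(s(n))_{n\in\NN}$ in the $\tilde{\sigma}$-chains $y_i=\sigma^{i-1}(x)$ ($1\le i\le d$), and assembles the radical-rational dynamical system $(\mathcal{M}_f)$ whose equations are \eqref{eq:syssubsec}--\eqref{eq:syssubfin} together with the output equation $z=y_{1,0}$; the difference polynomial witnessing D-algebraicity of $(s(dn))_{n\in\NN}$ is precisely the one returned by \Cref{algo:Algo1} applied to this system. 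By \Cref{th:theo1} (equivalently, by the output specification of \Cref{algo:Algo1}), the order of that polynomial, measured in $\tilde{\sigma}=\sigma^d$, is at most the dimension $M$ of $(\mathcal{M}_f)$, that is, the number of state indeterminates $y_{i,j}$. So it suffices to show $M\le r$.

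To this end, write $r+1=d\,r_1+r_0$ as in the proof and run once through \eqref{eq:syssubsec}--\eqref{eq:syssubfin}: the state variables are exactly the $y_{i,j}$ with $i\in\{1,\dots,d\}$ and $j\in\{0,\dots,r_1-1\}$ together with the $y_{i,r_1}$ for $i\in\{1,\dots,r_0-1\}$, and each of these genuinely has its $\tilde{\sigma}$-image prescribed by exactly one of \eqref{eq:syssub1}--\eqref{eq:syssubfin}, with no repetition. Under the correspondence $y_{i,j}\leftrightarrow\sigma^{(i-1)+jd}(x)$ this collection is exactly $\{\sigma^0(x),\sigma^1(x),\dots,\sigma^{r-1}(x)\}$, i.e.\ all difference monomials occurring in $p$ \emph{except} its highest shift $\sigma^r(x)$; the latter is not among the states of $(\mathcal{M}_f)$ but appears as $\tilde{\sigma}(y_{r_0,r_1-1})$, the next shift along one $\tilde{\sigma}$-chain. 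Counting thus gives $M=d\,r_1+(r_0-1)=(r+1)-1=r$ (and $M<r$ whenever some of $\sigma^0(x),\dots,\sigma^{r-1}(x)$ fail to occur in $p$, which only helps). The exponents $\mu_i\in\{1,m\}$ on the left-hand sides of \eqref{eq:syssub3}--\eqref{eq:syssubfin} are irrelevant here, since \Cref{th:theo1} bounds the order of the elimination polynomial by $M$ regardless of whether the $\mu_i$ equal $1$.

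Putting the two steps together, \Cref{algo:Algo1} returns a difference polynomial $q$ having $(s(dn))_{n\in\NN}$ as a D-algebraic zero, of order at most $r$ in $\mathcal{D}_{\sigma^d}(\KK,x)$; and since one application of $\sigma^d$ is $d$ applications of $\sigma$, the same $q$, regarded inside $\mathcal{D}_{\sigma}(\KK,x)$, involves only $x,\sigma^d(x),\dots,\sigma^{rd}(x)$ and hence has $\sigma$-order at most $dr$. The only place where genuine care is needed is the bookkeeping in the dimension count: one must check that every $y_{i,j}$ introduced in \eqref{eq:syssubsec}--\eqref{eq:syssubfin} is a bona fide state variable (it has a defining equation for its $\tilde{\sigma}$-shift) and is counted exactly once, and one must treat the degenerate cases separately --- when $d\mid r+1$, where one adopts the convention $r_0=d$, $r_1\to r_1-1$ in \eqref{eq:syssubsec}--\eqref{eq:syssubfin} so that $\sigma^r(x)$ is still the $\tilde{\sigma}$-image of the top state on the $d$-th chain, and when $x$ does not occur in $p$, where one first lowers the order of $p$. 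In each of these the bound $M\le r$ persists, and everything else is a direct appeal to \Cref{Th:subseq} and \Cref{th:theo1}.
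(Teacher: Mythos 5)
Your proposal is correct and follows essentially the same route as the paper: both reduce the statement to the dimension of the dynamical system constructed in the proof of \Cref{Th:subseq} (you count the state indeterminates $y_{i,j}$, the paper counts their defining equations, which is the same tally $d\,r_1+r_0-1=r$) and then invoke \Cref{algo:Algo1}/\Cref{th:theo1} for the order bound, with the passage from order $r$ in $\mathcal{D}_{\sigma^d}(\KK,x)$ to order $dr$ in $\mathcal{D}_{\sigma}(\KK,x)$ handled identically. Your side remark on the degenerate case $r_0=0$ is a harmless refinement that the paper leaves implicit.
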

\begin{proof} The proof reduces to determining the dimension of the dynamical system obtained in the proof of \Cref{Th:subseq}. Indeed, we have
\begin{align}
&(r_1-1)d\, \text{ equations from }\,  \eqref{eq:syssub1},\\
&r_0-1\, \text{ equations from }\,  \eqref{eq:syssub2},\\
&1\, \text{ equation from }\,  \eqref{eq:syssub3},\\
&d-r_0\, \text{ equations from }\,  \eqref{eq:syssub4},\\
&r_0-1\, \text{ equations from }\,  \eqref{eq:syssubfin},
\end{align}
which give a total of
\begin{equation}
dr_1 + r_0 - 1 = r+1-1=r,
\end{equation}
representing the dimension of the dynamical system and the order of $(s(n))_n$.
\end{proof}

\begin{example}[Illustrative example: $r=4, d=3$] We have $r_1=r_0=1$, $\tilde{\sigma}=\sigma^3$ and $p=p(x,\sigma(x),\ldots,\sigma^4(x))$. We build our dynamical system with the difference indeterminates $y_{1,0}, y_{2,0},y_{3,0}$ and $y_{1,1}$. This yields
\begin{equation}
    \begin{cases}
        \tilde{\sigma}(y_{1,0}) = y_{1,1}\\
        \tilde{\sigma}(y_{2,0})^m = R_p(y_{1,0},y_{2,0},y_{3,0},y_{1,1},\tilde{\sigma}(y_{2,0}))\\
        \tilde{\sigma}(y_{3,0})^m = R_p(y_{2,0},y_{3,0},y_{1,1},\tilde{\sigma}(y_{2,0}),\tilde{\sigma}(y_{3,0}))\\
        \tilde{\sigma}(y_{1,1})^m = R_p(y_{3,0},y_{1,1},\tilde{\sigma}(y_{2,0}),\tilde{\sigma}(y_{3,0}),\tilde{\sigma}(y_{1,1}))\\
        z = y_{1,0}
    \end{cases}.
\end{equation} \QEDA
\end{example}

\begin{remark} The recurrence equation of the subsequence need not be written in the same difference ring as the equation of the original sequence. In \Cref{Th:subseq}, both $(s(n))_{n\in\NN}$ and $(s(dn))_{n\in\NN}$ are zeros of the constructed difference polynomial of order $dr$ in $\mathcal{D}_{\sigma}(\KK,x)$, but $(s(n))_n$ is generally not a zero of the corresponding difference polynomial of order $r$ in $\mathcal{D}_{\sigma^d}(\KK,x)$. This is illustrated in the next example.
\end{remark}

\begin{example}[Explicit example: Catalan numbers at $(3n)_{n\in\NN}$] In \Cref{subsec:holorec}, \Cref{ex:catholotorec}, we used the implementation from \cite{teguia2024rational} to convert the holonomic equation of Catalan numbers $(C(n))_{n\in\NN}$ to the recursion
\begin{equation}
    s(n+2)=\frac{2 s(n+1) \left(8 s(n)+s(n+1)\right)}{10 s(n)-s(n+1)}=R_p(s(n),s(n+1)),
\end{equation}
with $p=(10x-\sigma(x))\sigma^2(x)-2\sigma(x)(8x+\sigma(x))$. Thus, the corresponding dynamical system in $\mathcal{D}_{\tilde{\sigma}}(\KK,x)$, with $\tilde{\sigma}=\sigma^3$, is given by
\begin{equation}
    \begin{cases}
        \tilde{\sigma}(y_{1,0}) = \frac{2 (16 y_{1,0}-y_{2,0}) (8 y_{1,0}+y_{2,0}) y_{2,0}}{(7 y_{1,0}-y_{2,0}) (10 y_{1,0}-y_{2,0})}\\[3mm]
        \tilde{\sigma}(y_{2,0}) = \frac{4 (16 y_{1,0}-y_{2,0}) (8 y_{1,0}+y_{2,0}) y_{2,0} (8 y_{1,0}-y_{2,0})}{(6 y_{1,0}-y_{2,0}) (7 y_{1,0}-y_{2,0}) (10 y_{1,0}-y_{2,0})}\\
        z = y_{1,0}
    \end{cases}.
\end{equation}
We obtain the equation
\begin{multline}\label{eq:catexp}
    343597383680 s(n)^{3} s(n+1)^{3}-69004689408 s(n)^{3} s(n+1)^{2} s(n+2)+4274823168 s(n)^{3} s(n+1) s(n+2)^{2}\\
    -83243160 s(n)^{3} s(n+2)^{3}-1258291200 s(n)^{2} s(n+1)^{4}+266514432 s(n)^{2} s(n+1)^{3} s(n+2)\\
    -26883000 s(n)^{2} s(n+1)^{2} s(n+2)^{2}+1043658 s(n)^{2} s(n+1) s(n+2)^{3}-122880 s(n) s(n+1)^{5}\\
    -101544 s(n) s(n+1)^{4} s(n+2)+65067 s(n) s(n+1)^{3} s(n+2)^{2}-4113 s(n) s(n+1)^{2} s(n+2)^{3}\\
    +1400 s(n+1)^{6}-30 s(n+1)^{5} s(n+2)-75 s(n+1)^{4} s(n+2)^{2}+5 s(n+1)^{3} s(n+2)^{3}=0.
\end{multline} 
One verifies that $(C(n))_n$ is not a solution of \eqref{eq:catexp}. Note that higher-order equations may be obtained here. Indeed, $(C(3n))_n$ is holonomic of order $1$ and degree $3$ and therefore satisfies an l.h.s. algebraic difference equation of order at most $4$ by \Cref{th:holoisratrec}. Our Gr\"obner bases method from \cite{teguia2024rational} yields an equation of order $3$. \QEDA
\end{example}

 This result suggests a potential bridge between difference algebra and classical acceleration techniques. As shown in \cite{brezinski1983convergence}, the careful selection of subsequences can effectively accelerate convergence. For D-algebraic sequences, this indicates the possibility of developing acceleration techniques based entirely on algebraic computations.

\section*{Acknowledgments} The author thanks Antonio Jimenez-Pastor and Veronika Pillewein for their insightful explanations of the resultant method for the D-algebraicity of $D^n$-finite functions, which resulted from stimulating discussions at ACA2025 (\url{https://aca2025.github.io/}) in Heraklion, Greece. The author is also grateful to Filip Mazowiecki and James Worrell for stimulating discussions regarding \cite{clemente2023rational}. This work was supported by UKRI Frontier Research Grant EP/X033813/1.
\bibliographystyle{alpha}

\bigskip

\footnotesize
\noindent {\bf Author's address:}

\smallskip

\noindent  Bertrand Teguia Tabuguia, University of Oxford
\hfill \url{bertrand.teguia@cs.ox.ac.uk}

\newcommand{\etalchar}[1]{$^{#1}$}

\end{document}